\tikzset{x=1cm,y=1cm,z=1cm}
\pgfplotsset{compat=1.16}
\title{Algebraic solution of project scheduling problems with temporal constraints\thanks{Operational Research 24(4), 69 (2024). https://link.springer.com/article/10.1007/s12351-024-00880-3}}
\author{N. Krivulin\thanks{Faculty of Mathematics and Mechanics, St.~Petersburg State University, Universitetsky Ave.~28, St.~Petersburg, 198504, Russia; 
nkk@math.spbu.ru.}
\and
S. Gubanov\thanks{St.~Petersburg Branch, JSC ``Design Bureau ``Lutch'', Akademika Pavlova Str.~14a, St.~Petersburg, 197376, Russia; segubanov@mail.ru}
}
\date{}
\newtheorem{theorem}{Theorem}
\newtheorem{lemma}[theorem]{lemma}
\theoremstyle{definition}
\newtheorem{example}{Example}
\begin{document}

\maketitle

\begin{abstract}
New solutions for problems in optimal scheduling of activities in a project under temporal constraints are developed in the framework of tropical algebra which deals with the theory and application of algebraic systems with idempotent operations. We start with a constrained tropical optimization problem that has an objective function represented as a vector form given by an arbitrary matrix, and that can be solved analytically in a closed but somewhat complicated form. We examine a special case of the problem when the objective function is given by a matrix of unit rank, and show that the solution can be sufficiently refined in this case, which results in an essentially simplified analytical form and reduced computational complexity of the solution. We exploit the obtained result to find complete solutions of project scheduling problems to minimize the project makespan and the maximum absolute deviation of start times of activities under temporal constraints. The constraints under consideration include ``start--start'', ``start--finish'' and ``finish--start'' precedence relations, release times, release deadlines and completion deadlines for activities. As an application, we consider optimal scheduling problems of a vaccination project in a medical centre.
\\

\textbf{Key-Words:} idempotent semiefield, tropical optimization, minimax optimization problem, temporal project scheduling, project management.
\\

\textbf{MSC (2020):} 90C24, 15A80, 90C47, 90B35
\end{abstract}

\section{Introduction}

Project scheduling, which is concerned with the optimal assignment of start and finish times to the activities in a project subject to temporal and resource constraints, forms an integral part of project management. In general, the project scheduling problems with various scarce resources and complex objectives can be rather difficult to solve. To handle these problems, many of which are known to be $NP$-complete, a range of solution techniques is used, including methods and algorithms of integer and mixed integer linear programming, combinatorial and graph optimization, as well as a variety of heuristic optimization techniques \cite{Demeulemeester2002Project,Tkindt2006Multicriteria,Vanhoucke2012Project,Ballestin2015Handbook}.

Scheduling problems that have temporal objectives and constraints, and do not take into account other scarce resources (manpower, machines, tools, equipment, space, money), constitute an important  class of temporal project scheduling problems \cite{Demeulemeester2002Project,Neumann2003Project}. These problems may occur as auxiliary problems in the general project scheduling and are of independent interest. 

The typical temporal objective functions to be minimized (or sometimes maximized, depending on the criterion used) include the makespan (duration) of the project, the absolute deviation of the finish times of activities from due dates and the absolute deviation (variation) of the start times of activities. The temporal constraints appear in the form of various precedence relationships (``start--start'', ``start--finish'', ``finish--start'') between the start and finish times of activities, and box constraints (release time, release deadline, completion deadline) on the start and finish times.

In many cases, temporal scheduling problems can be formulated as minimax optimization problems and represented as linear programs, which are solved using computational methods and algorithms of linear programming. This approach offers iterative numerical procedures to obtain a solution if it exists, but does not provide all solutions in a direct analytical form.

Another approach, which shows the potential to solve many temporal scheduling problems analytically, is to apply models and methods of tropical (idempotent) algebra dealing with the theory and application of algebraic systems with idempotent operations \cite{Golan2003Semirings,Heidergott2006Maxplus,Gondran2008Graphs,Butkovic2010Maxlinear,Maclagan2015Introduction}.
 
Since the early research works on tropical algebra in the 1960s \cite{Pandit1961New,Cuninghamegreen1962Describing,Giffler1963Scheduling}, temporal scheduling problems have been applied to motivate and illustrate the development. In subsequent years, these problems continued to serve as important applications and to inspire further researches in the area (see, e.g., \cite{Zimmermann2003Disjunctive,Bouquard2006Application,Goto2009Robust,Singh2014Efficient,Goto2017Forward}).

An efficient technique to handle temporal scheduling problems is proposed and developed in \cite{Krivulin2015Extremal,Krivulin2017Direct,Krivulin2017Tropical,Krivulin2017Tropicaloptimization,Krivulin2020Tropical}, based on the formulation and solution of the scheduling problems as optimization problems in the tropical algebra setting (tropical optimization problems). The analytical solution of the project scheduling problems may be rather cumbersome and involve many matrix-vector operations. However, as it is shown in \cite{Krivulin2021Algebraicsolution} (see also some examples in \cite{Krivulin2017Tropical,Krivulin2017Tropicaloptimization}), the solution of the problems with specific objectives, such as minimizing the absolute deviation of start times of activities, can be simplified.

In this paper, new solutions for problems of optimal scheduling of activities in a project under temporal constraints are developed in the framework of tropical algebra. We start with a constrained tropical optimization problem that has an objective function represented as a vector form given by an arbitrary matrix, and that can be solved analytically in a closed but somewhat complicated form. We examine a special case of the problem when the objective function is given by a matrix of unit rank, and show that the solution can be sufficiently refined in this case, which results in an essentially simplified analytical form and reduced computational complexity of the solution.

Next, we exploit the obtained result to find complete analytical solutions of project scheduling problems to minimize the project makespan and the maximum absolute deviation of start times of activities under temporal constraints. The constraints under consideration include ``start--start'', ``start--finish'' and ``finish--start'' precedence relations, release times, release deadlines and completion deadlines for activities. As an application, we consider optimal scheduling problems of a vaccination project in a medical centre in the healthcare industry.

The purpose of this study is to develop a general solution approach based on a tropical optimization technique proposed in \cite{Krivulin2021Algebraicsolution} to handle a class of optimization problems that are frequently encountered in project scheduling. The technique uses a special form of the objective function in the problems, which allows one to simplify the formal representation and reduce the computational complexity of solutions. As an application example, a temporal scheduling problem to minimize the maximum absolute deviation of start times of activities is solved in \cite{Krivulin2021Algebraicsolution} under few constraints on start and finish times of activities. To extend and generalize this result, we now offer an analytical solution to a tropical optimization problem with a general objective function and an extended system of constraints. We apply this solution to new temporal project scheduling problems (including a problem of common interest to minimize the project makespan), which results in new complete solutions of the problems in a compact vector form.

The contribution of the study is twofold. First, a new approach is developed to solve a range of temporal project scheduling problems analytically in an explicit form suitable for further analysis with formal methods and for numerical computations with moderate complexity. The proposed analytical solution has a strong potential to complement and supplement the existing approaches, which are based on numerical algorithms. Second, new solutions are obtained for a class of tropical optimization problems, which can find application to solve actual problems in operations research, management science and other fields. This result has implications for further development of methods and extension of applications of tropical optimization.

The rest of the paper is organized as follows. In Section~\ref{S-PSP}, we describe the temporal project scheduling problems of interest and represent them in the usual algebraic form. In Section~\ref{S-TA}, we outline the key definitions, notations and results of tropical algebra to provide a formal framework for further discussion. Section~\ref{S-TOP} includes our main result, which provides a new solution to an optimization problem given in the tropical algebra setting. The result obtained is applied in Section~\ref{S-SPSP} to derive solutions of the scheduling problems formulated above. In Section~\ref{S-AVS}, we discuss the application of the solutions to the optimal scheduling of vaccination sessions and give illustrative numerical examples. Section~\ref{S-C} presents some conclusion remarks.

\section{Project scheduling problems}
\label{S-PSP}

We consider project scheduling problems with temporal (time-based) objectives and constraints, which may typically occur in the project management practice in various areas \cite{Demeulemeester2002Project,Neumann2003Project,Tkindt2006Multicriteria,Vanhoucke2012Project,Ballestin2015Handbook}.

Suppose that a project consists of a set of activities that have to be performed in parallel to complete the project. The start and finish times of each activity are subject to constraints that are derived from technical, managerial, economic and other limitations and conditions involved in the implementation of the project. These temporal constraints include various forms of precedence relationships between and box constraints on the start and finish times of activities, which allow to represent a wide range of real-life situations.

The scheduling problems under consideration are to determine the start and finish times for each activity to find optimal schedules that achieve specified objectives under the given temporal constraints. One of the objectives is set to minimize the project makespan (total duration), which constitutes a natural optimality criterion in many scheduling problems. Another objective is to minimize the absolute deviation of start times over all activities, and it is of interest when for some technological, organizational or other reasons, all activities in the project have to be scheduled to start as simultaneously as possible.

Consider a project that involves $n$ activities, and introduce the following notation. For each activity $i=1,\ldots,n$, we denote by $x_{i}$ the start time and by $y_{i}$ the finish time to be scheduled. Let $b_{ij}$ be a given minimal allowed time lag between the start of activity $j$ and start of $i$, which specifies the ``start--start'' precedence relationship having the form of the inequality $b_{ij}+x_{j}\leq x_{i}$. If this lag is undefined, it is set to $-\infty$. Combining such inequalities for all $j$ yields one equivalent inequality
\begin{equation}
\max_{1\leq j\leq n}
(b_{ij}+x_{j})
\leq
x_{i}.
\label{I-maxbijxj-xi}
\end{equation}

Suppose that a minimal allowed time lag $c_{ij}$ between the start of activity $j$ and finish of $i$ is given or defined as $c_{ij}=-\infty$ otherwise. The ``start--finish'' relationship is represented by the inequality $c_{ij}+x_{j}\leq y_{i}$. We replace the inequalities for all $j$ by one combined inequality  
\begin{equation*}
\max_{1\leq j\leq n}
(c_{ij}+x_{j})
\leq
y_{i}.
\end{equation*}

Each activity $i$ is assumed to finish immediately once all ``start--finish'' constraints are satisfied, which makes at least one inequality hold as an equality. As a result, we further replace the combined inequality by the equality
\begin{equation}
\max_{1\leq j\leq n}
(c_{ij}+x_{j})
=
y_{i}.
\label{E-maxcijxj-xi}
\end{equation}

Furthermore, given a minimal allowed time lag $d_{ij}$ between the finish of activity $j$ and start of $i$ ($d_{ij}=-\infty$ if undefined), the ``finish--start'' precedence relationship is the inequality $d_{ij}+y_{j}\leq x_{i}$, which yields the inequality 
\begin{equation}
\max_{1\leq j\leq n}
(d_{ij}+y_{j})
\leq
x_{i}.
\label{I-maxdijxj-xi}
\end{equation}

Finally, let $g_{i}$ be a given release time and $h_{i}$ release deadline, which indicate the earliest and latest allowed start time, and $f_{i}$ be a given finish deadline, which indicates the latest allowed finish time. The release time and deadline constraints are represented as box constraints by the inequalities 
\begin{equation}
g_{i}
\leq
x_{i}
\leq
h_{i},
\qquad
y_{i}
\leq
f_{i}.
\label{I-gixihi-yifi}
\end{equation}

As an optimality criterion for scheduling, we consider the project makespan defined as the time interval between the earliest start time and the latest finish time of activities. This criterion, which is to be minimized, is defined as
\begin{equation}
\max_{1\leq i\leq n}y_{i}
-
\min_{1\leq i\leq n}x_{i}
=
\max_{1\leq i\leq n}y_{i}
+
\max_{1\leq i\leq n}(-x_{i}).
\label{E-maxyiminxi}
\end{equation}

Another criterion to be minimized is the maximum absolute deviation of start times of activities, which is written in the form 
\begin{equation}
\max_{1\leq i\leq n}x_{i}
-
\min_{1\leq i\leq n}x_{i}
=
\max_{1\leq i\leq n}x_{i}
+
\max_{1\leq i\leq n}(-x_{i}).
\label{E-maxximinxi}
\end{equation}

We combine the constraints at \eqref{I-maxbijxj-xi}--\eqref{I-gixihi-yifi} with the objective at \eqref{E-maxyiminxi} to state the following problem of minimizing the project makespan. For given parameters $b_{ij}$, $c_{ij}$, $d_{ij}$, $g_{i}$, $h_{i}$ and $f_{i}$, where $i,j=1,\ldots,n$, find $x_{i}$ and $y_{i}$ that achieve
\begin{equation}
\begin{aligned}
&
\min_{x_{i},y_{i}}
&&
\max_{1\leq i\leq n}y_{i}
+
\max_{1\leq i\leq n}(-x_{i});
\\
&
&&
\max_{1\leq j\leq n}
(b_{ij}+x_{j})
\leq
x_{i},
\quad
\max_{1\leq j\leq n}
(c_{ij}+x_{j})
=
y_{i},
\\
&
&&
\max_{1\leq j\leq n}
(d_{ij}+y_{j})
\leq
x_{i},
\quad
g_{i}
\leq
x_{i}
\leq
h_{i},
\quad
y_{i}
\leq
f_{i},
\quad
i=1,\ldots,n.
\end{aligned}
\label{P-minmaxyimaxxi}
\end{equation}

The replacement of the objective function by the function at \eqref{E-maxximinxi} leads to a problem of minimizing the maximum deviation of start times in the form
\begin{equation}
\begin{aligned}
&
\min_{x_{i},y_{i}}
&&
\max_{1\leq i\leq n}x_{i}
+
\max_{1\leq i\leq n}(-x_{i});
\\
&
&&
\max_{1\leq j\leq n}
(b_{ij}+x_{j})
\leq
x_{i},
\quad
\max_{1\leq j\leq n}
(c_{ij}+x_{j})
=
y_{i},
\\
&
&&
\max_{1\leq j\leq n}
(d_{ij}+y_{j})
\leq
x_{i},
\quad
g_{i}
\leq
x_{i}
\leq
h_{i},
\quad
y_{i}
\leq
f_{i},
\quad
i=1,\ldots,n.
\end{aligned}
\label{P-minmaxximaxxi}
\end{equation}

We observe that minimax problems \eqref{P-minmaxyimaxxi} and \eqref{P-minmaxximaxxi} can be readily rewritten as linear programs and then solved using computational methods and algorithms available in mathematical programming, such as the classical simplex method and Karmarkar algorithm \cite{Karmarkar1984New}. This approach offers iterative procedures that provide an approximate numerical solution if the problem is solvable, or indicates the nonexistence of solutions otherwise. In what follows, we show how to handle the problems analytically using methods and results of tropical mathematics to derive a direct representation of all solutions in a compact vector form.

\section{Tropical algebra}
\label{S-TA}

We begin with an overview of key definitions, notations and results of tropical (idempotent) algebra, which provide a formal framework for the study. For more details on the theory and application of tropical algebra, one can consult the monographs and textbooks by \cite{Golan2003Semirings,Heidergott2006Maxplus,Gondran2008Graphs,Butkovic2010Maxlinear,Maclagan2015Introduction}.

\subsection{Idempotent semifield}

Consider a set $\mathbb{X}$ that contains two distinct elements zero $\mathbb{0}$ and one $\mathbb{1}$, and is closed under addition $\oplus$ and multiplication $\otimes$. We assume that the triplet $(\mathbb{X},\mathbb{0},\oplus)$ is a commutative idempotent monoid, $(\mathbb{X}\setminus\{\mathbb{0}\},\mathbb{1},\otimes)$ is an Abelian group, and multiplication is distributive over addition. The algebraic system $(\mathbb{X},\mathbb{0},\mathbb{1},\oplus,\otimes)$ is commonly referred to as the tropical (idempotent) semifield.

In the semifield, addition is idempotent, which means that $x\oplus x=x$ for any $x\in\mathbb{X}$, and does not allow inverses. Addition determines a partial order such that $x\leq y$ for some $x,y\in\mathbb{X}$ if and only if $x\oplus y=y$. Both addition and multiplication are isotone in each argument: the inequality $x\leq y$ implies the inequalities $x\oplus z\leq y\oplus z$ and $xz\leq yz$ for any $z$ (the multiplication sign $\otimes$ is omitted here and hereafter to save writing).

Furthermore, with respect to the partial order, addition possesses the extremal property (the majority law) that both inequalities $x\leq x\oplus y$ and $y\leq x\oplus y$ are valid for any $x,y$. The inequality $x\oplus y\leq z$ is equivalent to the system of inequalities $x\leq z$ and $y\leq z$. We assume that the partial order extends to a total order to make the idempotent semifield be linearly ordered. 

Every nonzero $x$ has an inverse $x^{-1}$ such that $xx^{-1}=\mathbb{1}$. Taking inverse is antitone: for nonzero $x,y\in\mathbb{X}$, the inequality $x\leq y$ results in $x^{-1}\geq y^{-1}$. Integer exponents routinely represent iterative multiplication as $x^{p}=xx^{p-1}$, $x^{-p}=(x^{-1})^{p}$ and $\mathbb{0}^{p}=\mathbb{0}$ for any $x\ne\mathbb{0}$ and integer $p>0$. We assume that the equation $x^{p}=a$ has a unique solution in $x$ for any $a\in\mathbb{X}$ and integer $p>0$ to make the semifield radicable, where the rational exponents are defined as well. 

For any $a,b\in\mathbb{X}$ and rational $q>0$, the binomial identity takes the form $(a\oplus b)^{q}=a^{q}\oplus b^{q}$, whereas the inequality between the arithmetic and geometric means becomes $a\oplus b\geq (ab)^{1/2}$. The extensions of both identity and inequality to more than two summands are straightforward.

An example of a linearly ordered, radicable, idempotent semifield is the real system $\mathbb{R}_{\max,+}=(\mathbb{R}\cup\{-\infty\},-\infty,0,\max,+)$, also known as max-plus algebra. In this semifield, addition is defined as maximum and multiplication as arithmetic addition with the zero $\mathbb{0}$ and one $\mathbb{1}$ given respectively by $-\infty$ and $0$. The inverse for $x\ne\mathbb{0}$ corresponds to the conventional opposite number $-x$, the exponentiation $x^{y}$ coincides with the standard multiplication $xy$, and the order determined by addition is equivalent to the natural linear order on $\mathbb{R}$.

Another example is min-algebra $\mathbb{R}_{\min}=(\mathbb{R}_{+}\cup\{+\infty\},+\infty,1,\min,\times)$, where $\mathbb{R}_{+}$ is the set of positive reals, $\oplus=\min$, $\otimes=\times$, $\mathbb{0}=+\infty$ and $\mathbb{1}=1$. The operations of inversion and exponentiation are defined as usual, whereas the order induced by addition is opposite to the natural linear order on $\mathbb{R}_{+}$.

\subsection{Algebra of matrices and vectors}

Consider matrices and vectors of elements from $\mathbb{X}$ and denote the set of matrices with $m$ rows and $n$ columns by $\mathbb{X}^{m\times n}$, and the set of column vectors with $n$ elements by $\mathbb{X}^{n}$. A matrix (vector) with all entries equal to zero $\mathbb{0}$ is the zero matrix (vector) denoted by $\bm{0}$. If a vector has no zero entries, it is called regular. A matrix without zero columns is called column-regular.  

Addition, multiplication and scalar multiplication of matrices and vectors are performed according to the standard entrywise rules, where the arithmetic addition and multiplication are replaced by $\oplus$ and $\otimes$. The monotonicity of the scalar operations $\oplus$ and $\otimes$ extends to matrix and vector operations, where the inequalities are interpreted entrywise.

For a vector $\bm{x}$, its transpose is denoted $\bm{x}^{T}$. The multiplicative conjugate transpose of a nonzero column vector $\bm{x}=(x_{i})$ is the row vector $\bm{x}^{-}=(x_{i}^{-})$, where $x_{i}^{-}=x_{i}^{-1}$ if $x_{i}\ne\mathbb{0}$, and $x_{i}^{-}=\mathbb{0}$ otherwise.

Let $\bm{A}=(a_{ij})$ be a square matrix in $\mathbb{X}^{n\times n}$. The trace of $\bm{A}$ is calculated as
\begin{equation*}
\mathop\mathrm{tr}\bm{A}
=
a_{11}\oplus\cdots\oplus a_{nn}
=
\bigoplus_{k=1}^{n}a_{kk}.
\end{equation*}

For conforming matrices $\bm{A}$, $\bm{B}$, $\bm{C}$ and scalar $x$, the following equalities hold:
\begin{equation*}
\mathop\mathrm{tr}(\bm{A}\oplus\bm{B})
=
\mathop\mathrm{tr}\bm{A}
\oplus
\mathop\mathrm{tr}\bm{B},
\qquad
\mathop\mathrm{tr}(\bm{A}\bm{C})
=
\mathop\mathrm{tr}(\bm{C}\bm{A}),
\qquad
\mathop\mathrm{tr}(x\bm{A})
=
x\mathop\mathrm{tr}\bm{A}.
\end{equation*}

A square matrix with all entries equal to $\mathbb{1}$ on the diagonal and to $\mathbb{0}$ elsewhere, is the identity matrix denoted $\bm{I}$. The nonnegative integer exponent of a matrix $\bm{A}\in\mathbb{X}^{n\times n}$ is defined for any integer $p>0$ as $\bm{A}^{p}=\bm{A}^{p-1}\bm{A}$ and $\bm{A}^{0}=\bm{I}$.

For any matrix $\bm{A}\in\mathbb{X}^{n\times n}$, we introduce the trace function
\begin{equation*}
\mathop\mathrm{Tr}(\bm{A})
=
\mathop\mathrm{tr}\bm{A}
\oplus
\cdots
\oplus
\mathop\mathrm{tr}\bm{A}^{n}
=
\bigoplus_{k=1}^{n}\mathop\mathrm{tr}\bm{A}^{k}.
\end{equation*}

If $\mathop\mathrm{Tr}(\bm{A})\leq\mathbb{1}$, the Kleene star operator maps $\bm{A}$ to the matrix 
\begin{equation*}
\bm{A}^{\ast}
=
\bm{I}
\oplus
\bm{A}
\oplus
\cdots
\oplus
\bm{A}^{n-1}
=
\bigoplus_{k=0}^{n-1}\bm{A}^{k}.
\end{equation*}

The tropical spectral radius of the matrix $\bm{A}$ is given by
\begin{equation*}
\rho(\bm{A})
=
\mathop\mathrm{tr}\bm{A}
\oplus
\cdots
\oplus
\mathop\mathrm{tr}\nolimits^{1/n}(\bm{A}^{n})
=
\bigoplus_{k=1}^{n}\mathop\mathrm{tr}\nolimits^{1/k}(\bm{A}^{k}).
\end{equation*}

Let $\bm{1}=(\mathbb{1},\ldots,\mathbb{1})^{T}$ denote the vector of ones. For any vector $\bm{x}$ and matrix $\bm{A}$, tropical analogues of vector and matrix norms are respectively defined as
\begin{equation*}
\|\bm{x}\|
=
\bm{1}^{T}\bm{x}
=
\bm{x}^{T}\bm{1},
\qquad
\|\bm{A}\|
=
\bm{1}^{T}\bm{A}\bm{1}.
\end{equation*}

We conclude this overview with a solution of a vector inequality. Suppose that given a matrix $\bm{A}\in\mathbb{X}^{m\times n}$ and vector $\bm{b}\in\mathbb{X}^{m}$, the problem is to find vectors $\bm{x}\in\mathbb{X}^{n}$ that satisfy the inequality
\begin{equation}
\bm{A}\bm{x}
\leq
\bm{b}.
\label{I-Axb}
\end{equation}

A complete solution of this problem can be given as follows.
\begin{lemma}
\label{L-Axb}
For any column-regular matrix $\bm{A}$ and regular vector $\bm{b}$, the vector $\bm{x}$ is a solution to inequality \eqref{I-Axb} if and only if
\begin{equation*}
\bm{x}
\leq
(\bm{b}^{-}\bm{A})^{-}.
\end{equation*}
\end{lemma}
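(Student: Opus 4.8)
The plan is to prove the equivalence by establishing the two implications separately, working entirely with the scalar order relations and the majority/residuation properties collected in Section~\ref{S-TA}. Write $\bm{A}=(a_{ij})$, $\bm{b}=(b_i)$, $\bm{x}=(x_j)$. The vector inequality \eqref{I-Axb} is, row by row, the system $\bigoplus_{j=1}^{n} a_{ij}x_j \leq b_i$ for $i=1,\ldots,m$, and by the equivalence ``$u\oplus v\leq w \iff u\leq w,\ v\leq w$'' this is in turn the scalar system $a_{ij}x_j\leq b_i$ for all $i,j$.

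First I would prove the ``only if'' direction. Assume $\bm{A}\bm{x}\leq\bm{b}$, so $a_{ij}x_j\leq b_i$ for every pair $i,j$. Fix $j$. For each $i$ with $a_{ij}\neq\mathbb{0}$, multiply both sides by $a_{ij}^{-1}=a_{ij}^{-}$ (isotonicity of multiplication) to get $x_j\leq a_{ij}^{-}b_i$; for $i$ with $a_{ij}=\mathbb{0}$ the term $a_{ij}^{-}b_i=\mathbb{0}$ contributes nothing. Since $\bm{A}$ is column-regular, there is at least one $i$ with $a_{ij}\neq\mathbb{0}$, so taking the meet — actually, taking the infimum is not available; instead observe $x_j$ is below every $a_{ij}^{-}b_i$, hence $x_j\leq\bigoplus_i a_{ij}^{-}b_i$ is \emph{not} what we want. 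Rather, the $j$th entry of $(\bm{b}^{-}\bm{A})^{-}$ is $\left(\bigoplus_{i} b_i^{-} a_{ij}\right)^{-1}$, and since inversion is antitone, $x_j\leq a_{ij}^{-}b_i$ for all relevant $i$ is equivalent to $x_j^{-1}\geq b_i^{-1}a_{ij} = b_i^{-}a_{ij}$ for all $i$ (the zero terms again being harmless), hence $x_j^{-1}\geq\bigoplus_i b_i^{-}a_{ij}$ by the majority law, and inverting back (antitone) gives $x_j\leq\left(\bigoplus_i b_i^{-}a_{ij}\right)^{-1}$, which is exactly the $j$th entry of $(\bm{b}^{-}\bm{A})^{-}$. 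Care is needed here that $\bm{b}^{-}\bm{A}$ has no zero entries so that its conjugate is well defined: column-regularity of $\bm{A}$ plus regularity of $\bm{b}$ guarantees each column sum $\bigoplus_i b_i^{-}a_{ij}$ is nonzero.

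Next I would prove the ``if'' direction, which is essentially the same chain of implications read backwards. Suppose $\bm{x}\leq(\bm{b}^{-}\bm{A})^{-}$, i.e.\ $x_j\leq\left(\bigoplus_i b_i^{-}a_{ij}\right)^{-1}$ for every $j$. Inverting (antitone) gives $x_j^{-1}\geq\bigoplus_i b_i^{-}a_{ij}\geq b_i^{-}a_{ij}$ for each $i$, using the majority property; then multiplying by $a_{ij}b_i$ and rearranging via isotonicity yields $a_{ij}x_j\leq b_i$ for all $i,j$, and reassembling the rows with the equivalence $u\oplus v\leq w\iff u\leq w\ \&\ v\leq w$ gives $\bm{A}\bm{x}\leq\bm{b}$, completing the proof.

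The only genuinely delicate point — and the step I would flag as the main obstacle — is the careful bookkeeping of the \emph{zero entries}: one must check that the conjugate-transpose notation $(\cdot)^{-}$ is being applied only to vectors whose relevant entries are nonzero (hence the hypotheses that $\bm{A}$ is column-regular and $\bm{b}$ is regular), and that terms with $a_{ij}=\mathbb{0}$ drop out consistently on both sides of every inequality rather than creating spurious constraints or undefined inverses. Everything else is a routine application of the isotone/antitone properties of $\otimes$, $(\cdot)^{-1}$ and the majority law. In fact, since the argument is fully reversible, the cleanest write-up proves both directions at once by a single string of ``if and only if'' steps rather than two separate arguments.
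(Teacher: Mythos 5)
Your proof is correct. Note that the paper itself states Lemma~\ref{L-Axb} without proof, treating it as a known residuation result from the tropical algebra literature, so there is nothing to compare against; your entrywise argument (reduce $\bm{A}\bm{x}\leq\bm{b}$ to the scalar system $a_{ij}x_j\leq b_i$ via the equivalence $u\oplus v\leq z\iff u\leq z,\ v\leq z$, then invert antitonely and reassemble) is the standard one. Two cosmetic points: the step $x_j^{-1}\geq b_i^{-}a_{ij}$ for all $i$ implying $x_j^{-1}\geq\bigoplus_i b_i^{-}a_{ij}$ uses that same equivalence rather than the majority law (the majority law is the converse direction $x\leq x\oplus y$), and in the ``if'' direction the clean factor to multiply $x_j^{-1}\geq b_i^{-}a_{ij}$ by is $x_j b_i$ (giving $b_i\geq a_{ij}x_j$ directly), not $a_{ij}b_i$. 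You rightly flag the zero-entry bookkeeping as the only delicate point; the one case you leave implicit is $x_j=\mathbb{0}$ in the ``only if'' direction, where $x_j^{-1}$ does not exist but the target inequality holds trivially.
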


\section{Tropical optimization problems}
\label{S-TOP}

In this section, we focus on optimization problems, which are formulated and solved in the tropical algebra setting, and hence referred to as tropical optimization problems. We start with a problem of constrained minimization of a multiplicative conjugate quadratic (pseudoquadratic) form $\bm{x}^{-}\bm{A}\bm{x}$, where $\bm{A}$ is a given square matrix and $\bm{x}$ is an unknown column vector. We describe an existing solution of the problem, known in a rather cumbersome matrix-vector form, and make some comments on the solution.

Next, we consider a special case of the problem, where the above quadratic form is defined using a rank-one matrix. As a key result, we refine the existing solution to take into account the special form of the matrix, which leads to results in a more compact and transparent form.

\subsection{Minimization of conjugate quadratic forms}

We now consider a problem to minimize a conjugate quadratic form subject to linear and box constraints on the solution vector. Given matrices $\bm{A},\bm{B}\in\mathbb{X}^{n\times n}$ and vectors $\bm{g},\bm{h}\in\mathbb{X}^{n}$, the problem is to find regular vectors $\bm{x}\in\mathbb{X}^{n}$ that attain the minimum
\begin{equation}
\begin{aligned}
%&
%\text{minimize}
\min_{\bm{x}>\bm{0}}
%&&
&&&
\bm{x}^{-}\bm{A}\bm{x};
\\
%&
%\text{subject to}
\text{s.t.}
&&&
\bm{B}\bm{x}
\leq
\bm{x},
\quad
\bm{g}
\leq
\bm{x}
\leq
\bm{h}.
\label{P-minxAx-Bxx-gxh}
\end{aligned}
\end{equation}

A complete solution of problem \eqref{P-minxAx-Bxx-gxh} can be obtained for instance as a direct consequence of the solution of a problem with an extended set of constraints or with a more general objective function in \cite{Krivulin2017Direct}. In what follows, we use the solution given in the next claim.
\begin{theorem}
\label{T-minxAx-Bxx-gxh}
Let $\bm{A}$ be a matrix with $\rho(\bm{A})>\mathbb{0}$, and $\bm{B}$ matrix for which $\mathop\mathrm{Tr}(\bm{B})\leq\mathbb{1}$. Let $\bm{g}$ be a vector and $\bm{h}$ regular vector such that $\bm{h}^{-}\bm{B}^{\ast}\bm{g}\leq\mathbb {1}$. Denote
\begin{gather}
\bm{F}_{k}
=
\bigoplus_{0\leq i_{1}+\cdots+i_{k}\leq n-k}\bm{A}\bm{B}^{i_{1}}\cdots\bm{A}\bm{B}^{i_{k}},
\qquad
k=1,\ldots,n;
\label{E-Fk}
\\
\bm{G}_{k}
=
\bigoplus_{0\leq i_{0}+i_{1}+\cdots+i_{k}\leq n-k-1}
\bm{B}^{i_{0}}(\bm{A}\bm{B}^{i_{1}}\cdots\bm{A}\bm{B}^{i_{k}}),
\qquad
k=1,\ldots,n-1.
\label{E-Gk}
\end{gather}

Then, the minimum value of the objective function in problem \eqref{P-minxAx-Bxx-gxh} is equal to
\begin{equation}
\theta
=
\bigoplus_{k=1}^{n}\mathop\mathrm{tr}\nolimits^{1/k}(\bm{F}_{k})
\oplus
\bigoplus_{k=1}^{n-1}(\bm{h}^{-}\bm{G}_{k}\bm{g})^{1/k},
\label{E-theta}
\end{equation}
and all regular solutions of the problem are given in the parametric form
\begin{equation}
\bm{x}
=
\bm{G}\bm{u},
%\bm{G}(\theta)\bm{u},
\qquad
\bm{G}
=
(\theta^{-1}\bm{A}\oplus\bm{B})^{\ast}
=
\bigoplus_{k=1}^{n-1}
\theta^{-k}
\bm{G}_{k}
\oplus
\bm{B}^{\ast},
\label{E-x}
\end{equation}
where $\bm{u}\ne\bm{0}$ is a vector of parameters that satisfies the condition
\begin{equation}
\bm{g}
\leq
\bm{u}
\leq
(\bm{h}^{-}\bm{G})^{-}.
\label{I-u}
\end{equation}
\end{theorem}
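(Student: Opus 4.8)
The plan is to follow the standard template for problems of this type: replace the minimization of the pseudoquadratic form $\bm{x}^{-}\bm{A}\bm{x}$ by the search for the least value of an auxiliary scalar parameter for which a purely linear system of inequalities in $\bm{x}$ remains solvable, and then read off both the optimal value and the entire solution set from the solution of that system. (Alternatively, the whole statement can be obtained by specializing the solution of the more general constrained problem in \cite{Krivulin2017Direct}; below I sketch the self-contained route.)

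First I would use the identity that, for a regular vector $\bm{x}$, the scalar $\bm{x}^{-}\bm{A}\bm{x}$ equals the least $\mu$ with $\bm{A}\bm{x}\le\mu\bm{x}$. Hence, if $\theta$ denotes the value sought, then $\theta$ is the smallest scalar for which the system $\bm{A}\bm{x}\le\theta\bm{x}$, $\bm{B}\bm{x}\le\bm{x}$, $\bm{g}\le\bm{x}\le\bm{h}$ admits a regular solution, and the optimal vectors are exactly the solutions of this system at that $\theta$. Since $\rho(\bm{A})>\mathbb{0}$ forces $\theta>\mathbb{0}$ (the $\bm{A}^{k}$-term in $\bm{F}_{k}$ already gives $\theta\ge\rho(\bm{A})$), the scalar $\theta^{-1}$ is defined and the first two inequalities merge into $(\theta^{-1}\bm{A}\oplus\bm{B})\bm{x}\le\bm{x}$. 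Writing $\bm{C}=\theta^{-1}\bm{A}\oplus\bm{B}$, I would invoke the standard solution of $\bm{C}\bm{x}\le\bm{x}$: it has regular solutions precisely when $\mathop\mathrm{Tr}(\bm{C})\le\mathbb{1}$ (equivalently $\rho(\bm{C})\le\mathbb{1}$), in which case $\bm{C}^{\ast}$ exists and every solution is $\bm{x}=\bm{C}^{\ast}\bm{u}$ for a suitable $\bm{u}$, with $\bm{u}=\bm{x}$ an admissible choice since $\bm{C}^{\ast}\bm{x}=\bm{x}$ there.

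The core of the argument is then a bookkeeping computation. Expanding $\mathop\mathrm{Tr}(\theta^{-1}\bm{A}\oplus\bm{B})$, collecting the words in the alphabet $\{\theta^{-1}\bm{A},\bm{B}\}$ of length at most $n$ by the number $k$ of factors $\bm{A}$, and using cyclicity of the trace to absorb leading powers of $\bm{B}$, the condition $\mathop\mathrm{Tr}(\bm{C})\le\mathbb{1}$ becomes the family $\theta^{-k}\mathop\mathrm{tr}(\bm{F}_{k})\le\mathbb{1}$, $k=1,\ldots,n$, with $\bm{F}_{k}$ as in \eqref{E-Fk}, i.e.\ $\theta\ge\bigoplus_{k}\mathop\mathrm{tr}^{1/k}(\bm{F}_{k})$. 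Likewise, expanding $\bm{C}^{\ast}=\bigoplus_{k=0}^{n-1}\bm{C}^{k}$ and grouping by the number of factors $\bm{A}$ gives $\bm{C}^{\ast}=\bm{B}^{\ast}\oplus\bigoplus_{k=1}^{n-1}\theta^{-k}\bm{G}_{k}$ with $\bm{G}_{k}$ as in \eqref{E-Gk}, i.e.\ the matrix $\bm{G}$ of \eqref{E-x}. It is here that one must justify truncating the relevant series at length $n$ (resp.\ $n-1$), which is exactly the content of $\mathop\mathrm{Tr}(\bm{C})\le\mathbb{1}\Leftrightarrow\rho(\bm{C})\le\mathbb{1}$ and of the definition of the Kleene star; I expect this indexing to be the main, though routine, obstacle.

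It remains to impose the box constraints on $\bm{x}=\bm{C}^{\ast}\bm{u}$. For the lower bound, since $\bm{C}^{\ast}\ge\bm{I}$ and $\bm{C}^{\ast}\bm{C}^{\ast}=\bm{C}^{\ast}$, replacing $\bm{u}$ by $\bm{u}\oplus\bm{g}$ does not change $\bm{C}^{\ast}\bm{u}$ once $\bm{g}\le\bm{C}^{\ast}\bm{u}$, so one may assume $\bm{u}\ge\bm{g}$; for the upper bound, Lemma~\ref{L-Axb} applied to $\bm{C}^{\ast}\bm{u}\le\bm{h}$ gives $\bm{u}\le(\bm{h}^{-}\bm{C}^{\ast})^{-}=(\bm{h}^{-}\bm{G})^{-}$, which is condition \eqref{I-u}. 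This interval of admissible $\bm{u}$ is nonempty iff $\bm{h}^{-}\bm{C}^{\ast}\bm{g}\le\mathbb{1}$, and inserting the expansion of $\bm{C}^{\ast}$ together with the hypothesis $\bm{h}^{-}\bm{B}^{\ast}\bm{g}\le\mathbb{1}$ turns this into $\theta^{-k}\bm{h}^{-}\bm{G}_{k}\bm{g}\le\mathbb{1}$, i.e.\ $\theta\ge\bigoplus_{k}(\bm{h}^{-}\bm{G}_{k}\bm{g})^{1/k}$. Taking the join of the two lower bounds on $\theta$ gives exactly the value $\theta$ of \eqref{E-theta} as the least admissible one; substituting it back into $\bm{x}=\bm{C}^{\ast}\bm{u}$ with $\bm{g}\le\bm{u}\le(\bm{h}^{-}\bm{G})^{-}$ yields the announced parametric family, and the equivalences used along the way show these are all the optimal solutions.
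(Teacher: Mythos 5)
The paper does not actually prove Theorem~\ref{T-minxAx-Bxx-gxh}: it imports the result as a known consequence of the more general constrained problems solved in the cited reference (Krivulin 2017), so there is no in-paper argument to compare yours against line by line. Your self-contained sketch is, however, the standard derivation that underlies that reference, and it is sound: the reduction of $\min\bm{x}^{-}\bm{A}\bm{x}$ to the least $\mu$ with $\bm{A}\bm{x}\le\mu\bm{x}$, the merging into $(\mu^{-1}\bm{A}\oplus\bm{B})\bm{x}\le\bm{x}$, the characterization of regular solutions of $\bm{C}\bm{x}\le\bm{x}$ via $\mathop{\mathrm{Tr}}(\bm{C})\le\mathbb{1}$ and $\bm{x}=\bm{C}^{\ast}\bm{u}$, the word-by-word expansion of $\mathop{\mathrm{Tr}}$ and of the Kleene star grouped by the number of $\bm{A}$-factors (which yields exactly $\bm{F}_{k}$, via cyclicity of the trace, and $\bm{G}_{k}$), and the treatment of the box constraints through Lemma~\ref{L-Axb} together with the compatibility condition $\bm{h}^{-}\bm{G}\bm{g}\le\mathbb{1}$ are all the right ingredients, and taking the join of the resulting lower bounds on $\mu$ and checking attainment at that join gives \eqref{E-theta}--\eqref{I-u}. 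Two places deserve slightly more care than your sketch gives them: first, the justification that $\mu^{-1}$ is well defined should rest on the standard inequality $\bm{x}^{-}\bm{A}\bm{x}\ge\rho(\bm{A})>\mathbb{0}$ for every regular $\bm{x}$ rather than on the $\bm{A}^{k}$-term of the yet-to-be-established formula for $\theta$; second, the equivalence ``regular solutions of $\bm{C}\bm{x}\le\bm{x}$ exist iff $\mathop{\mathrm{Tr}}(\bm{C})\le\mathbb{1}$, in which case they are exactly $\bm{C}^{\ast}\bm{u}$'' is itself a nontrivial theorem of tropical algebra that you invoke without statement, so in a full write-up it should be cited explicitly alongside Lemma~\ref{L-Axb}. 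Neither point is a gap in the logic, only in the bookkeeping.
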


In the statement of the theorem, the condition $\mathop\mathrm{Tr}(\bm{B})\leq\mathbb{1}$ is responsible for the existence of regular solutions of the linear inequality $\bm{B}\bm{x}\leq\bm{x}$, whereas the condition $\bm{h}^{-}\bm{B}^{\ast}\bm{g}\leq\mathbb{1}$ ensures that both the linear inequality constraint and the box constraint $\bm{g}\leq\bm{x}\leq\bm{h}$ have a common regular solution.

Note that the solution provided by the theorem has a computational complexity of order not exceeding $O(n^{5})$, where the most computationally expensive part which yields the upper bound is the evaluation of the minimum $\theta$ (see, e.g., \cite{Krivulin2017Direct,Krivulin2017Tropicaloptimization,Krivulin2022Minimizing}).  

Consider a special case of problem \eqref{P-minxAx-Bxx-gxh}, where the matrix $\bm{A}$ has proportional (collinear) columns. Given vectors $\bm{p},\bm{q}\in\mathbb{X}^{n}$, $\bm{q}\ne\bm{0}$, suppose that the matrix is defined as $\bm{A}=\bm{p}\bm{q}^{-}$ and thus has unit rank. We now formulate a problem of minimizing the conjugate quadratic form with rank-one matrix as
\begin{equation}
\begin{aligned}
&
\min_{\bm{x}>\bm{0}}
&&
\bm{x}^{-}\bm{p}\bm{q}^{-}\bm{x};
\\
&&&
\bm{B}\bm{x}
\leq
\bm{x},
\quad
\bm{g}
\leq
\bm{x}
\leq
\bm{h}.
\end{aligned}
\label{P-minxpqx-Bxx-gxh}
\end{equation}

Several versions of problem \eqref{P-minxpqx-Bxx-gxh} with either a reduced set of constraints or less general objective functions are examined in \cite{Krivulin2017Tropical,Krivulin2017Tropicaloptimization,Krivulin2020Tropical,Krivulin2021Algebraicsolution} using somewhat different techniques. Below we give a solution to the problem set in the general form at \eqref{P-minxpqx-Bxx-gxh}, which combines and summarizes results for the reduced versions indicated above.

\subsection{Conjugate quadratic form with rank-one matrix}

To solve problem \eqref{P-minxpqx-Bxx-gxh}, we apply the result of Theorem~\ref{T-minxAx-Bxx-gxh}, where we exploit the particular form of the matrix $\bm{A}$ to reduce the solution expressions as much as possible. We prove the following assertion.
\begin{theorem}
\label{T-minxpqx-Bxx-gxh}
Let $\bm{B}$ be a matrix for which $\mathop\mathrm{Tr}(\bm{B})\leq\mathbb{1}$. Let $\bm{g}$ be a vector and $\bm{h}$ regular vector such that $\bm{h}^{-}\bm{B}^{\ast}\bm{g}\leq\mathbb{1}$. Let $\bm{p}$ and $\bm{q}$ be nonzero vectors such that $\bm{q}^{-}\bm{p}>\mathbb{0}$.

Then, the minimum value of the objective function in problem \eqref{P-minxpqx-Bxx-gxh} is equal to
\begin{equation}
\theta
=
\bigoplus_{0\leq i+j\leq n-2}
(\bm{h}^{-}\bm{B}^{i}\bm{p})
(\bm{q}^{-}\bm{B}^{j}\bm{g})
\oplus
\bm{q}^{-}\bm{B}^{\ast}\bm{p},
\label{E-theta1}
\end{equation}
and all regular solutions are given in the parametric form
\begin{equation}
\bm{x}
=
\bm{G}
\bm{u},
\qquad
\bm{G}
=
\bigoplus_{0\leq i+j\leq n-2}
\theta^{-1}
\bm{B}^{i}\bm{p}
\bm{q}^{-}\bm{B}^{j}
\oplus
\bm{B}^{\ast},
\label{E-x1}
\end{equation}
where $\bm{u}\ne\bm{0}$ is a vector of parameters that satisfies the condition
\begin{equation}
\bm{g}
\leq
\bm{u}
\leq
(\bm{h}^{-}\bm{G})^{-}.
\label{I-u1}
\end{equation}
\end{theorem}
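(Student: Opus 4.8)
The plan is to derive Theorem~\ref{T-minxpqx-Bxx-gxh} as a specialization of Theorem~\ref{T-minxAx-Bxx-gxh} by substituting $\bm{A}=\bm{p}\bm{q}^{-}$ and simplifying every matrix expression that appears in \eqref{E-Fk}--\eqref{I-u} using the rank-one structure. First I would check that the hypotheses of Theorem~\ref{T-minxAx-Bxx-gxh} are met: the condition $\rho(\bm{A})>\mathbb{0}$ follows from $\bm{A}=\bm{p}\bm{q}^{-}$ together with the assumption $\bm{q}^{-}\bm{p}>\mathbb{0}$, since $\mathop\mathrm{tr}\bm{A}=\mathop\mathrm{tr}(\bm{p}\bm{q}^{-})=\bm{q}^{-}\bm{p}$ by the cyclic property of the trace; and $\mathop\mathrm{Tr}(\bm{B})\leq\mathbb{1}$ and $\bm{h}^{-}\bm{B}^{\ast}\bm{g}\leq\mathbb{1}$ are assumed verbatim. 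So Theorem~\ref{T-minxAx-Bxx-gxh} applies and delivers $\theta$ and the solution set; it remains to rewrite them.

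The computational heart is the observation that $\bm{A}\bm{B}^{i_{1}}\cdots\bm{A}\bm{B}^{i_{k}}=\bm{p}\bm{q}^{-}\bm{B}^{i_{1}}\bm{p}\bm{q}^{-}\cdots\bm{p}\bm{q}^{-}\bm{B}^{i_{k}}$ collapses, via associativity and the scalarity of each inner factor $\bm{q}^{-}\bm{B}^{i_{\ell}}\bm{p}$, into $\left(\prod_{\ell=1}^{k-1}\bm{q}^{-}\bm{B}^{i_{\ell}}\bm{p}\right)\bm{p}\bm{q}^{-}\bm{B}^{i_{k}}$, and in particular $\bm{F}_{k}=\bigoplus(\text{scalar})\,\bm{p}\bm{q}^{-}\bm{B}^{i_{k}}$. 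Taking traces and using $\mathop\mathrm{tr}(\bm{p}\bm{q}^{-}\bm{B}^{i_{k}})=\bm{q}^{-}\bm{B}^{i_{k}}\bm{p}$, one gets $\mathop\mathrm{tr}(\bm{F}_{k})=\bigoplus_{i_{1}+\cdots+i_{k}\leq n-k}\prod_{\ell=1}^{k}(\bm{q}^{-}\bm{B}^{i_{\ell}}\bm{p})$, which after taking the $1/k$-th power and using the binomial identity $(a\oplus b)^{q}=a^{q}\oplus b^{q}$ reduces to $\bigoplus_{0\leq i\leq n-1}\bm{q}^{-}\bm{B}^{i}\bm{p}=\bm{q}^{-}\bm{B}^{\ast}\bm{p}$; the nontrivial direction is that the $1/k$-th power of a product of $k$ such terms is dominated by the max of those terms (this is where the arithmetic--geometric mean inequality $a_{1}\oplus\cdots\oplus a_{k}\geq(a_{1}\cdots a_{k})^{1/k}$, i.e.\ its equivalent form, is used). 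Similarly $\bm{G}_{k}=\bigoplus_{i_{0}+\cdots+i_{k}\leq n-k-1}\left(\prod_{\ell=1}^{k-1}\bm{q}^{-}\bm{B}^{i_{\ell}}\bm{p}\right)\bm{B}^{i_{0}}\bm{p}\bm{q}^{-}\bm{B}^{i_{k}}$, so $\bm{h}^{-}\bm{G}_{k}\bm{g}=\bigoplus\left(\prod_{\ell=1}^{k-1}\bm{q}^{-}\bm{B}^{i_{\ell}}\bm{p}\right)(\bm{h}^{-}\bm{B}^{i_{0}}\bm{p})(\bm{q}^{-}\bm{B}^{i_{k}}\bm{g})$, and again taking $1/k$-th powers and telescoping over $k$ collapses $\bigoplus_{k=1}^{n-1}(\bm{h}^{-}\bm{G}_{k}\bm{g})^{1/k}$ to $\bigoplus_{0\leq i+j\leq n-2}(\bm{h}^{-}\bm{B}^{i}\bm{p})(\bm{q}^{-}\bm{B}^{j}\bm{g})$. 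Combining the two pieces gives \eqref{E-theta1}.

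For the solution set, I would start from $\bm{G}=(\theta^{-1}\bm{A}\oplus\bm{B})^{\ast}=(\theta^{-1}\bm{p}\bm{q}^{-}\oplus\bm{B})^{\ast}$. Expanding the Kleene star and grouping the terms containing at least one copy of $\theta^{-1}\bm{p}\bm{q}^{-}$ separately from the pure-$\bm{B}$ terms, and using the same rank-one collapse together with the identity $\mathop\mathrm{Tr}(\theta^{-1}\bm{A}\oplus\bm{B})\leq\mathbb{1}$ (which holds precisely because $\theta\geq\rho(\bm{A})$ and the cross terms are bounded by $\theta^{-1}$ times scalars already counted in $\theta$), one obtains $\bm{G}=\bigoplus_{0\leq i+j\leq n-2}\theta^{-1}\bm{B}^{i}\bm{p}\bm{q}^{-}\bm{B}^{j}\oplus\bm{B}^{\ast}$, i.e.\ \eqref{E-x1}; here one needs the standard fact that the coefficient $\theta^{-k}$ in front of $\bm{G}_{k}$ in \eqref{E-x} absorbs into $\theta^{-1}$ because all extra factors $\theta^{-1}\bm{q}^{-}\bm{B}^{i_{\ell}}\bm{p}\leq\mathbb{1}$ by the definition of $\theta$. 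The parameter condition \eqref{I-u} becomes \eqref{I-u1} verbatim with the new $\bm{G}$, and the regularity of $\bm{x}=\bm{G}\bm{u}$ follows as in Theorem~\ref{T-minxAx-Bxx-gxh}. The main obstacle I anticipate is bookkeeping the index ranges carefully: showing that after the collapse the multi-indices $(i_{1},\ldots,i_{k})$ with $i_{1}+\cdots+i_{k}\leq n-k$ contribute nothing beyond the single-index range $0\leq i\leq n-1$ (respectively $0\leq i+j\leq n-2$ for the two-sided terms), which rests on the $1/k$-th-power / geometric-mean estimate and must be argued in both inequality directions to get equality rather than just a bound.
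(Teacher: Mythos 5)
Your proposal is correct and follows essentially the same route as the paper's own proof: specialize Theorem~\ref{T-minxAx-Bxx-gxh} with $\bm{A}=\bm{p}\bm{q}^{-}$, collapse the products $\bm{A}\bm{B}^{i_{1}}\cdots\bm{A}\bm{B}^{i_{k}}$ via the scalar factors $\bm{q}^{-}\bm{B}^{i_{\ell}}\bm{p}$, use the arithmetic--geometric mean inequality together with the extremal property of addition to show the $k=1$ terms dominate both sums in \eqref{E-theta}, and absorb $\theta^{-k}\bm{G}_{k}$ into $\theta^{-1}\bm{G}_{1}$ via $\theta^{-1}\bm{q}^{-}\bm{B}^{i}\bm{p}\leq\mathbb{1}$. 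All the key steps, including the verification of $\rho(\bm{p}\bm{q}^{-})=\bm{q}^{-}\bm{p}>\mathbb{0}$ and the two-sided argument needed to turn the bounds into equalities, match the paper's argument.
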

\begin{proof}
We start with the observation that under the condition $\bm{q}^{-}\bm{p}>\mathbb{0}$, the spectral radius of the matrix $\bm{p}\bm{q}^{-}$ is greater than zero as
\begin{equation*}
\rho(\bm{p}\bm{q}^{-})
=
\bigoplus_{k=1}^{n}\mathop\mathrm{tr}\nolimits^{1/k}(\bm{p}\bm{q}^{-})^{k}
=
\bm{q}^{-}\bm{p}
>
\mathbb{0}.
\end{equation*}

Since the assumptions of Theorem~\ref{T-minxAx-Bxx-gxh} with $\bm{A}=\bm{p}\bm{q}^{-}$ are fulfilled, we apply this theorem and then examine the expressions for the minimum $\theta$ of the objective function at \eqref{E-theta} and for the set of solution vectors $\bm{x}$ at \eqref{E-x} and \eqref{I-u}. To handle the minimum, we first simplify the formulas for the matrix sums $\bm{F}_{k}$ and $\bm{G}_{k}$ at \eqref{E-Fk} and \eqref{E-Gk}, and then evaluate both terms on the right-hand side of \eqref{E-theta}.

By substitution $\bm{A}=\bm{p}\bm{q}^{-}$ in the matrix product $\bm{A}\bm{B}^{i_{1}}\cdots\bm{A}\bm{B}^{i_{k}}$, we represent this product as
\begin{equation*}
\bm{p}\bm{q}^{-}\bm{B}^{i_{1}}\cdots\bm{p}\bm{q}^{-}\bm{B}^{i_{k}}
=
(\bm{q}^{-}\bm{B}^{i_{1}}\bm{p}\cdots\bm{q}^{-}\bm{B}^{i_{k-1}}\bm{p})\bm{p}\bm{q}^{-}\bm{B}^{i_{k}}.
\end{equation*}

In this case, the right-hand side of \eqref{E-Fk} becomes
\begin{equation}
\bm{F}_{k}
=
\bigoplus_{0\leq i_{1}+\cdots+i_{k}\leq n-k}
(\bm{q}^{-}\bm{B}^{i_{1}}\bm{p}
\cdots
\bm{q}^{-} \bm{B}^{i_{k-1}}\bm{p})
\bm{p}\bm{q}^{-}\bm{B}^{i_{k}}.
\label{E-Fk1}
\end{equation}

In the same way, we rewrite the product $\bm{B}^{i_{0}}(\bm{A}\bm{B}^{i_{1}}\cdots\bm{A}\bm{B}^{i_{k}})$ in the form
\begin{equation*}
\bm{B}^{i_{0}}(\bm{p}\bm{q}^{-}\bm{B}^{i_{1}}\cdots\bm{p}\bm{q}^{-}\bm{B}^{i_{k}})
=
(\bm{q}^{-}\bm{B}^{i_{1}}\bm{p}\cdots\bm{q}^{-}\bm{B}^{i_{k-1}}\bm{p})\bm{B}^{i_{0}}\bm{p}\bm{q}^{-}\bm{B}^{i_{k}},
\end{equation*}
and then obtain
\begin{equation}
\bm{G}_{k}
=
\bigoplus_{0\leq i_{0}+i_{1}+\cdots+i_{k}\leq n-k-1}
(\bm{q}^{-}\bm{B}^{i_{1}}\bm{p}\cdots\bm{q}^{-}\bm{B}^{i_{k-1}}\bm{p})\bm{B}^{i_{0}}\bm{p}\bm{q}^{-}\bm{B}^{i_{k}}.
\label{E-Gk1}
\end{equation}

To derive the first term in \eqref{E-theta}, we take trace of both sides of \eqref{E-Fk1} and use properties of trace, which results in
\begin{equation*}
\mathop\mathrm{tr}\bm{F}_{k}
=
\bigoplus_{\substack{0\leq i_{1}+\cdots+i_{k}\leq n-k\\i_{1},\ldots,i_{k}\geq0}}
\bm{q}^{-}\bm{B}^{i_{1}}\bm{p}\cdots\bm{q}^{-}\bm{B}^{i_{k}}\bm{p}.
\end{equation*}

We consider the sum of the roots of traces over all $k=1,\ldots,n$ and rearrange it by separating the summand for $k=1$ to write
\begin{equation*}
\bigoplus_{k=1}^{n}
\mathop\mathrm{tr}\nolimits^{1/k}(\bm{F}_{k})
=
\mathop\mathrm{tr}\bm{F}_{1}
\oplus
\bigoplus_{k=2}^{n}
\mathop\mathrm{tr}\nolimits^{1/k}(\bm{F}_{k}),
\qquad
\mathop\mathrm{tr}\bm{F}_{1}
=
\bigoplus_{k=0}^{n-1}
\bm{q}^{-}\bm{B}^{k}\bm{p}
=
\bm{q}^{-}\bm{B}^{\ast}\bm{p}.
\end{equation*}

By applying the inequality between the arithmetic and geometric means and the extremal property of addition, we have
\begin{equation*}
(\bm{q}^{-}\bm{B}^{i_{1}}\bm{p}\cdots\bm{q}^{-}\bm{B}^{i_{k}}\bm{p})^{1/k}
\leq
\bm{q}^{-}\bm{B}^{i_{1}}\bm{p}
\oplus\cdots\oplus
\bm{q}^{-}\bm{B}^{i_{k}}\bm{p}
\leq
\bigoplus_{k=0}^{n-1}
\bm{q}^{-}\bm{B}^{k}\bm{p}
=
\bm{q}^{-}\bm{B}^{\ast}\bm{p}.
\end{equation*}

With this result, we see that the first summand dominates the others since
\begin{equation*}
\mathop\mathrm{tr}\nolimits^{1/k}(\bm{F}_{k})
=
\bigoplus_{\substack{0\leq i_{1}+\cdots+i_{k}\leq n-k\\i_{1},\ldots,i_{k}\geq0}}
(\bm{q}^{-}\bm{B}^{i_{1}}\bm{p}\cdots\bm{q}^{-}\bm{B}^{i_{k}}\bm{p})^{1/k}
\leq
\bm{q}^{-}\bm{B}^{\ast}\bm{p}
=
\mathop\mathrm{tr}\bm{F}_{1}.
\end{equation*}

In this case, it follows from the extremal property of addition that
\begin{equation}
\bigoplus_{k=1}^{n}
\mathop\mathrm{tr}\nolimits^{1/k}(\bm{F}_{k})
=
\mathop\mathrm{tr}\bm{F}_{1}.
\label{E-F1}
\end{equation}

We now consider the second term on the right-hand side of \eqref{E-theta}. Multiplication of both sides of \eqref{E-Gk1} by $\bm{h}^{-}$ from the left and by $\bm{g}$ from the right yields
\begin{equation*}
\bm{h}^{-}\bm{G}_{k}\bm{g}
=
\bigoplus_{0\leq i_{0}+i_{1}+\cdots+i_{k}\leq n-k-1}
(\bm{q}^{-}\bm{B}^{i_{1}}\bm{p}\cdots\bm{q}^{-}\bm{B}^{i_{k-1}}\bm{p})\bm{h}^{-}\bm{B}^{i_{0}}\bm{p}\bm{q}^{-}\bm{B}^{i_{k}}\bm{g}.
\end{equation*}

After taking roots and summing over all $k=1,\ldots,n-1$, we arrive at the sum
\begin{equation*}
\bigoplus_{k=1}^{n-1}
(\bm{h}^{-}\bm{G}_{k}\bm{g})^{1/k}
=
\bm{h}^{-}\bm{G}_{1}\bm{g}
\oplus
\bigoplus_{k=2}^{n-1}
(\bm{h}^{-}\bm{G}_{k}\bm{g})^{1/k},
\end{equation*}
where
\begin{equation*}
\bm{h}^{-}\bm{G}_{1}\bm{g}
=
\bigoplus_{0\leq i_{0}+i_{1}\leq n-2}
\bm{h}^{-}\bm{B}^{i_{0}}\bm{p}\bm{q}^{-}\bm{B}^{i_{1}}\bm{g}.
\end{equation*}

Next, we again apply the inequality between the arithmetic and geometric means, and use the extremal property of addition to write
\begin{multline*}
(
(\bm{q}^{-}\bm{B}^{i_{1}}\bm{p}\cdots\bm{q}^{-}\bm{B}^{i_{k-1}}\bm{p})
\bm{h}^{-}\bm{B}^{i_{0}}\bm{p}
\bm{q}^{-}\bm{B}^{i_{k}}\bm{g}
)^{1/k}
\\
\leq
(\bm{q}^{-}\bm{B}^{i_{1}}\bm{p}
\oplus\cdots\oplus
\bm{q}^{-}\bm{B}^{i_{k-1}}\bm{p})
\oplus
\bm{h}^{-}\bm{B}^{i_{0}}\bm{p}
\bm{q}^{-}\bm{B}^{i_{k}}\bm{g}
\leq
\mathop\mathrm{tr}\bm{F}_{1}
\oplus
\bm{h}^{-}\bm{G}_{1}\bm{g}.
\end{multline*}

We use the last inequality to derive an upper bound for the second terms as
\begin{multline*}
\bigoplus_{k=1}^{n-1}
(\bm{h}^{-}\bm{G}_{k}\bm{g})^{1/k}
\\
=
\bigoplus_{k=1}^{n-1}
\bigoplus_{0\leq i_{0}+i_{1}+\cdots+i_{k}\leq n-k-1}
((\bm{q}^{-}\bm{B}^{i_{1}}\bm{p}\cdots\bm{q}^{-}\bm{B}^{i_{k-1}}\bm{p})\bm{h}^{-}\bm{B}^{i_{0}}\bm{p}\bm{q}^{-}\bm{B}^{i_{k}}\bm{g})^{1/k}
\\
\leq
\mathop\mathrm{tr}\bm{F}_{1}
\oplus
\bm{h}^{-}\bm{G}_{1}\bm{g}.
\end{multline*}

By combining the obtained result with \eqref{E-F1}, we arrive at the double inequality
\begin{equation*}
\mathop\mathrm{tr}\bm{F}_{1}
\oplus
\bm{h}^{-}\bm{G}_{1}\bm{g}
\leq
\bigoplus_{k=1}^{n}
\mathop\mathrm{tr}\nolimits^{1/k}(\bm{F}_{k})
\oplus
\bigoplus_{k=1}^{n-1}
(\bm{h}^{-}\bm{G}_{k}\bm{g})^{1/k}
\leq
\mathop\mathrm{tr}\bm{F}_{1}
\oplus
\bm{h}^{-}\bm{G}_{1}\bm{g}.
\end{equation*}

It follows from this double inequality that 
\begin{equation*}
\theta
=
\mathop\mathrm{tr}\bm{F}_{1}
\oplus
\bm{h}^{-}\bm{G}_{1}\bm{g}
=
\bigoplus_{0\leq i+j\leq n-2}
\bm{h}^{-}\bm{B}^{i}\bm{p}\bm{q}^{-}\bm{B}^{j}\bm{g}
\oplus
\bm{q}^{-}\bm{B}^{\ast}\bm{p},
\end{equation*}
which leads to the representation of $\theta$ in the form of \eqref{E-theta1}.

It remains to examine the solution given by \eqref{E-x} and \eqref{I-u}, and rearrange the matrix $\bm{G}$, which generates all solutions. First, we rewrite the expression for $\bm{G}$ at \eqref{E-x} as follows:
\begin{equation*}
\bm{G}
=
(\theta^{-1}\bm{p}\bm{q}^{-}\oplus\bm{B})^{\ast}
=
\theta^{-1}
\bm{G}_{1}
\oplus
\bigoplus_{k=2}^{n-1}
\theta^{-k}
\bm{G}_{k}
\oplus
\bm{B}^{\ast},
\end{equation*}
where
\begin{equation*}
\bm{G}_{1}
=
\bigoplus_{0\leq i_{0}+i_{1}\leq n-2}
\bm{B}^{i_{0}}\bm{p}\bm{q}^{-}\bm{B}^{i_{1}}.
\end{equation*}

We note that $\theta\geq\bm{q}^{-}\bm{B}^{\ast}\bm{p}\geq\bm{q}^{-}\bm{B}^{i}\bm{p}$, which yields the inequality $\theta^{-1}\bm{q}^{-}\bm{B}^{i}\bm{p}\leq\mathbb{1}$ for all $i=0,\ldots,n-1$. With this inequality, we have
\begin{equation*}
\theta^{-k}
(\bm{q}^{-}\bm{B}^{i_{1}}\bm{p}\cdots\bm{q}^{-}\bm{B}^{i_{k-1}}\bm{p})\bm{B}^{i_{0}}\bm{p}\bm{q}^{-}\bm{B}^{i_{k}}
\leq
\theta^{-1}\bm{B}^{i_{0}}\bm{p}\bm{q}^{-}\bm{B}^{i_{k}}.
\end{equation*}

Furthermore, we use the last inequality to write
\begin{multline*}
\theta^{-k}\bm{G}_{k}
=
\bigoplus_{0\leq i_{0}+i_{1}+\cdots+i_{k}\leq n-k-1}
\theta^{-k}(\bm{q}^{-}\bm{B}^{i_{1}}\bm{p}\cdots\bm{q}^{-}\bm{B}^{i_{k-1}}\bm{p})\bm{B}^{i_{0}}\bm{p}\bm{q}^{-}\bm{B}^{i_{k}}
\\\leq
\bigoplus_{0\leq i_{0}+i_{1}\leq n-2}
\theta^{-1}
\bm{B}^{i_{0}}\bm{p}\bm{q}^{-}\bm{B}^{i_{1}}
=
\theta^{-1}\bm{G}_{1},
\end{multline*}
from which it follows the double inequality 
\begin{equation*}
\theta^{-1}\bm{G}_{1}
\oplus
\bm{B}^{\ast}
\leq
\theta^{-1}
\bm{G}_{1}
\oplus
\bigoplus_{k=2}^{n-1}
\theta^{-k}
\bm{G}_{k}
\oplus
\bm{B}^{\ast}
\leq
\theta^{-1}\bm{G}_{1}
\oplus
\bm{B}^{\ast}.
\end{equation*}

As a result, we arrive at the matrix $\bm{G}$ in the form
\begin{equation*}
\bm{G}
=
\theta^{-1}\bm{G}_{1}
\oplus
\bm{B}^{\ast}
=
\bigoplus_{0\leq i+j\leq n-2}
\theta^{-1}
\bm{B}^{i}\bm{p}\bm{q}^{-}\bm{B}^{j}
\oplus
\bm{B}^{\ast},
\end{equation*}
which yields the formula in \eqref{E-x1}. Inequality \eqref{I-u1} is taken from \eqref{I-u} unchanged.
\end{proof}

We conclude with a discussion of the computational complexity, which is involved in the solution given by formulas \eqref{E-theta1}, \eqref{E-x1} and \eqref{I-u1}. First, we note that the evaluation of the Kleene matrix $\bm{B}^{\ast}$ takes $O(n^{3})$ operations if the Floyd–Warshall algorithm applied to the graph of the matrix $\bm{B}$ is implemented (or $O(n^{4})$ operations when using direct matrix multiplications).

Consider the calculation of the minimum $\theta$ according to \eqref{E-theta1}. We examine the first term on the right-hand side, which has the form of the sum
\begin{equation*}
\bigoplus_{0\leq i+j\leq n-2}
(\bm{h}^{-}\bm{B}^{i}\bm{p})
(\bm{q}^{-}\bm{B}^{j}\bm{g}).
\end{equation*}

To obtain the value of this sum, we calculate the vectors $\bm{B}^{i}\bm{p}$ and $\bm{B}^{j}\bm{g}$ by using matrix-vector multiplications consequently for all $i,j=0,\ldots,n-2$, which requires $O(n^{3})$ operations. With these vectors already computed, we find all summands and then the entire sum in $O(n^{2})$ time. Given the Kleene matrix $\bm{B}^{\ast}$, the second term of the form $\bm{q}^{-}\bm{B}^{\ast}\bm{p}$ requires $O(n^{2})$ operations. Considering the complexity of $O(n^{3})$ to obtain the matrix $\bm{B}^{\ast}$, we see that calculation with \eqref{E-theta1} takes no more than $O(n^{3})$ operations.

The most computationally demanding part of finding solution vectors by using \eqref{E-x1} and \eqref{I-u1} is calculating the matrix $\bm{G}$. It follows from the proof of Theorem~\ref{T-minxpqx-Bxx-gxh} that this matrix is initially defined as the Kleene matrix $\bm{G}=(\theta^{-1}\bm{p}\bm{q}^{-}\oplus\bm{B})^{\ast}$. With the minimum $\theta$ fixed, the complexity of calculating this matrix is no more than $O(n^{3})$. This complexity has the same order as that required to evaluate the minimum $\theta$, and can thus be considered as the overall complexity of the solution provided by Theorem~\ref{T-minxpqx-Bxx-gxh}.

Finally, we note that in the context of project scheduling problems, the constraint matrix $\bm{B}$ frequently appears to be sparse (have few nonzero entries), which allows one to achieve further reduction of computational complexity by using efficient algebraic techniques specifically designed for sparse matrices.

\section{Solution of project scheduling problems}
\label{S-SPSP}

We are now in a position to represent the project scheduling problems of interest in terms of tropical algebra and solve them by applying Theorem~\ref{T-minxpqx-Bxx-gxh}.

First, we consider problem \eqref{P-minmaxyimaxxi} and replace the scalar operations by the respective operations of max-plus algebra $\mathbb{R}_{\max,+}$ to rewrite the problem as
\begin{equation*}
\begin{aligned}
&
\min_{x_{i},y_{i}}
&&
\bigoplus_{1\leq i\leq n}y_{i}
\bigoplus_{1\leq l\leq n}x_{l}^{-1};
\\
&
&&
\bigoplus_{1\leq j\leq n}
b_{ij}x_{j}
\leq
x_{i},
\quad
\bigoplus_{1\leq j\leq n}
c_{ij}x_{j}
=
y_{i},
\\
&
&&
\bigoplus_{1\leq j\leq n}
d_{ij}y_{j}
\leq
x_{i},
\quad
g_{i}
\leq
x_{i}
\leq
h_{i},
\quad
y_{i}
\leq
f_{i},
\quad
i=1,\ldots,n.
\end{aligned}
\end{equation*}

To represent the problem in a vector form, we introduce the following matrices and vectors:
\begin{gather*}
\bm{B}
=
(b_{ij}),
\qquad
\bm{C}
=
(c_{ij}),
\qquad
\bm{D}
=
(d_{ij}),
\\
\bm{g}
=
(g_{i}),
\qquad
\bm{h}
=
(h_{i}),
\qquad
\bm{f}
=
(f_{i}),
\qquad
\bm{x}
=
(x_{i}),
\qquad
\bm{y}
=
(y_{i}).
\end{gather*}

With the matrix-vector notation, the problem takes the form
\begin{equation}
\begin{aligned}
&
\min_{\bm{x},\bm{y}}
&&
\bm{x}^{-}\bm{1}\bm{1}^{T}\bm{y};
\\
&
&&
\bm{B}\bm{x}
\leq
\bm{x},
\quad
\bm{C}\bm{x}
=
\bm{y},
\quad
\bm{D}\bm{y}
\leq
\bm{x},
\\
&
&&
\bm{g}
\leq
\bm{x}
\leq
\bm{h},
\quad
\bm{y}
\leq
\bm{f}.
\end{aligned}
\label{P-minxy-Bxx-Cxy-Dyx-gxh-yf}
\end{equation}

The next statement offers a complete solution to problem \eqref{P-minxy-Bxx-Cxy-Dyx-gxh-yf}.
\begin{theorem}
\label{T-minxy-Bxx-Cxy-Dyx-gxh-yf}
Let $\bm{B}$ and $\bm{D}$ be matrices and $\bm{C}$ column-regular matrix such that the matrix $\bm{R}=\bm{B}\oplus\bm{D}\bm{C}$ satisfies the condition $\mathop\mathrm{Tr}(\bm{R})\leq\mathbb{1}$.
Let $\bm{g}$ be a vector, $\bm{f}$ and $\bm{h}$ regular vectors such that the vector $\bm{s}^{-}=\bm{f}^{-}\bm{C}\oplus\bm{h}^{-}$ satisfies the condition $\bm{s}^{-}\bm{R}^{\ast}\bm{g}\leq\mathbb{1}$.
Then, the minimum value of the objective function in problem \eqref{P-minxy-Bxx-Cxy-Dyx-gxh-yf} is equal to
\begin{equation}
\theta
=
\bigoplus_{0\leq i+j\leq n-2}
\|\bm{s}^{-}\bm{R}^{i}\|
\|\bm{C}\bm{R}^{j}\bm{g}\|
\oplus
\|\bm{C}\bm{R}^{\ast}\|,
\label{E-theta2}
\end{equation}
and all regular solutions are given in the parametric form
\begin{equation}
\bm{x}
=
\bm{G}\bm{u},
\qquad
\bm{y}
=
\bm{C}\bm{G}\bm{u},
\qquad
\bm{G}
=
\bigoplus_{0 \leq i+j \leq n-2}
\theta^{-1}
\bm{R}^{i}\bm{1}
\bm{1}^{T}\bm{C}\bm{R}^{j}
\oplus
\bm{R}^{\ast},
\label{E-x2}
\end{equation}
where $\bm{u}\ne\bm{0}$ is a vector of parameters that satisfies the conditions
\begin{equation}
\bm{g}
\leq
\bm{u}
\leq
(\bm{s}^{-}\bm{G})^{-}.
\label{I-u2}
\end{equation}
\end{theorem}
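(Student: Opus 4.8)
The plan is to eliminate the variable $\bm{y}$ by means of the equality constraint $\bm{C}\bm{x}=\bm{y}$, so that problem \eqref{P-minxy-Bxx-Cxy-Dyx-gxh-yf} becomes an instance of the rank-one conjugate-quadratic problem \eqref{P-minxpqx-Bxx-gxh} to which Theorem~\ref{T-minxpqx-Bxx-gxh} applies, and then to translate that theorem's output into the notation of the statement. Since every feasible pair $(\bm{x},\bm{y})$ satisfies $\bm{y}=\bm{C}\bm{x}$ exactly, the substitution is an equivalence and not a relaxation: the objective $\bm{x}^{-}\bm{1}\bm{1}^{T}\bm{y}$ turns into $\bm{x}^{-}\bm{1}\bm{1}^{T}\bm{C}\bm{x}=\bm{x}^{-}\bm{p}\bm{q}^{-}\bm{x}$ with $\bm{p}=\bm{1}$ and $\bm{q}^{-}=\bm{1}^{T}\bm{C}$, which is a conjugate quadratic form defined by the rank-one matrix $\bm{p}\bm{q}^{-}=\bm{1}\bm{1}^{T}\bm{C}$.

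Next I would reduce the constraint system to the canonical form $\bm{R}\bm{x}\leq\bm{x}$, $\bm{g}\leq\bm{x}\leq\bm{s}$. After substitution, the inequalities $\bm{B}\bm{x}\leq\bm{x}$ and $\bm{D}\bm{y}\leq\bm{x}$ read $\bm{B}\bm{x}\leq\bm{x}$ and $\bm{D}\bm{C}\bm{x}\leq\bm{x}$, which by the majority law are equivalent to the single inequality $(\bm{B}\oplus\bm{D}\bm{C})\bm{x}\leq\bm{x}$, i.e. $\bm{R}\bm{x}\leq\bm{x}$ with $\bm{R}=\bm{B}\oplus\bm{D}\bm{C}$. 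For the box part, the constraint $\bm{C}\bm{x}=\bm{y}\leq\bm{f}$ is, by Lemma~\ref{L-Axb} (applicable because $\bm{C}$ is column-regular and $\bm{f}$ regular), equivalent to $\bm{x}\leq(\bm{f}^{-}\bm{C})^{-}$; combined with $\bm{x}\leq\bm{h}$ this gives $\bm{x}\leq\bm{h}\wedge(\bm{f}^{-}\bm{C})^{-}$. Since taking inverses is antitone and turns $\oplus$ into the entrywise meet $\wedge$, we have $\bm{h}\wedge(\bm{f}^{-}\bm{C})^{-}=(\bm{h}^{-}\oplus\bm{f}^{-}\bm{C})^{-}=(\bm{s}^{-})^{-}=\bm{s}$, so the box constraint becomes exactly $\bm{g}\leq\bm{x}\leq\bm{s}$. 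Thus the problem coincides with \eqref{P-minxpqx-Bxx-gxh} in which the matrix $\bm{B}$ and vector $\bm{h}$ are replaced by $\bm{R}$ and $\bm{s}$.

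Finally I would check the hypotheses of Theorem~\ref{T-minxpqx-Bxx-gxh} and read off the conclusion. The vector $\bm{s}$ is regular because $\bm{h}$ is regular, so $\bm{h}^{-}$ is a regular row and hence so is $\bm{s}^{-}=\bm{f}^{-}\bm{C}\oplus\bm{h}^{-}$; column-regularity of $\bm{C}$ yields $\bm{q}^{-}\bm{p}=\bm{1}^{T}\bm{C}\bm{1}=\|\bm{C}\|>\mathbb{0}$; the conditions $\mathop\mathrm{Tr}(\bm{R})\leq\mathbb{1}$ and $\bm{s}^{-}\bm{R}^{\ast}\bm{g}\leq\mathbb{1}$ are assumed. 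Applying Theorem~\ref{T-minxpqx-Bxx-gxh} with $\bm{p}=\bm{1}$, $\bm{q}^{-}=\bm{1}^{T}\bm{C}$ gives $\theta=\bigoplus_{0\leq i+j\leq n-2}(\bm{s}^{-}\bm{R}^{i}\bm{1})(\bm{1}^{T}\bm{C}\bm{R}^{j}\bm{g})\oplus\bm{1}^{T}\bm{C}\bm{R}^{\ast}\bm{1}$, which in norm notation is exactly \eqref{E-theta2}; similarly $\bm{G}=\bigoplus_{0\leq i+j\leq n-2}\theta^{-1}\bm{R}^{i}\bm{1}\bm{1}^{T}\bm{C}\bm{R}^{j}\oplus\bm{R}^{\ast}$ as in \eqref{E-x2}, the solution set is $\bm{x}=\bm{G}\bm{u}$ with $\bm{g}\leq\bm{u}\leq(\bm{s}^{-}\bm{G})^{-}$, which is \eqref{I-u2}, and $\bm{y}=\bm{C}\bm{x}=\bm{C}\bm{G}\bm{u}$ recovers the last expression in \eqref{E-x2}. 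The only step requiring genuine care — and hence the main, modest, obstacle — is the constraint reduction: confirming that the substitution $\bm{y}=\bm{C}\bm{x}$ is an exact equivalence and correctly merging the two upper bounds $\bm{x}\leq\bm{h}$ and $\bm{C}\bm{x}\leq\bm{f}$ into the single regular bound $\bm{s}$ via conjugation; once the problem is cast in the form \eqref{P-minxpqx-Bxx-gxh}, the rest is a direct specialization of Theorem~\ref{T-minxpqx-Bxx-gxh}.
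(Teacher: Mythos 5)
Your proposal is correct and follows essentially the same route as the paper's own proof: substitute $\bm{y}=\bm{C}\bm{x}$, merge the linear constraints into $\bm{R}\bm{x}\leq\bm{x}$, use Lemma~\ref{L-Axb} and conjugate transposition to fold $\bm{C}\bm{x}\leq\bm{f}$ and $\bm{x}\leq\bm{h}$ into the single bound $\bm{x}\leq\bm{s}$, and then specialize Theorem~\ref{T-minxpqx-Bxx-gxh} with $\bm{p}=\bm{1}$, $\bm{q}^{-}=\bm{1}^{T}\bm{C}$. Your explicit check that $\bm{s}$ is regular (inherited from the regularity of $\bm{h}$) is a small point the paper leaves implicit, but otherwise the two arguments coincide.
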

\begin{proof}
We start the solution by eliminating the unknown vector $\bm{y}$ from the problem. After the substitution $\bm{y}=\bm{C}\bm{x}$, problem \eqref{P-minxy-Bxx-Cxy-Dyx-gxh-yf} reduces to the problem
\begin{equation*}
\begin{aligned}
&
\min_{\bm{x}}
&&
\bm{x}^{-}\bm{1}\bm{1}^{T}\bm{C}\bm{x};
\\
&
&&
\bm{B}\bm{x}
\leq
\bm{x},
\quad
\bm{D}\bm{C}\bm{x}
\leq
\bm{x},
\\
&
&&
\bm{g}
\leq
\bm{x}
\leq
\bm{h},
\quad
\bm{C}\bm{x}
\leq
\bm{f}.
\end{aligned}
\end{equation*}

Furthermore, we rearrange the system of constraints in the new problem. We couple the inequalities $\bm{B}\bm{x}\leq\bm{x}$ and $\bm{D}\bm{C}\bm{x}\leq\bm{x}$ to write $\bm{R}\bm{x}=(\bm{B}\oplus\bm{D}\bm{C})\bm{x}\leq\bm{x}$.

Application of Lemma~\ref{L-Axb} to the inequality $\bm{C}\bm{x}\leq\bm{f}$ leads to the inequality $\bm{x}\leq(\bm{f}^{-}\bm{C})^{-}$. To combine this inequality with the right inequality in $\bm{g}\leq\bm{x}\leq\bm{h}$, we use conjugate transposition to represent these inequalities as $\bm{x}^{-}\geq\bm{f}^{-}\bm{C}$ and $\bm{x}^{-}\geq\bm{h}^{-}$. We replace the obtained inequalities by the inequality $\bm{x}^{-}\geq\bm{f}^{-}\bm{C}\oplus\bm{h}^{-}$, and then rewrite the last inequality as $\bm{x}\leq(\bm{f}^{-}\bm{C}\oplus\bm{h}^{-})^{-}=\bm{s}$.  

As a result, the above problem turns into the problem 
\begin{equation*}
\begin{aligned}
&
\min_{\bm{x}}
&&
\bm{x}^{-}\bm{1}\bm{1}^{T}\bm{C}\bm{x};
\\
&
&&
\bm{R}\bm{x}
\leq
\bm{x},
\quad
\bm{g}
\leq
\bm{x}
\leq
\bm{s};
\end{aligned}
\end{equation*}
which takes the form of \eqref{P-minxpqx-Bxx-gxh} with $\bm{p}=\bm{1}$, $\bm{q}^{-}=\bm{1}^{T}\bm{C}$, $\bm{B}=\bm{R}$ and $\bm{h}=\bm{s}$.

We solve this problem by using Theorem~\ref{T-minxpqx-Bxx-gxh}. First, we verify that the conditions of the theorem, given by the inequalities $\mathop\mathrm{Tr}(\bm{B})\leq\mathbb{1}$, $\bm{h}^{-}\bm{B}^{\ast}\bm{g}\leq\mathbb{1}$ and $\bm{q}^{-}\bm{p}>\mathbb{0}$, are satisfied. Indeed, the first and second inequalities now become $\mathop\mathrm{Tr}(\bm{R})\leq\mathbb{1}$ and $\|\bm{s}^{-}\bm{R}^{\ast}\|=\bm{s}^{-}\bm{R}^{\ast}\bm{1}\leq\mathbb{1}$, which are valid by the current assumptions. The third inequality holds in the form $\|\bm{C}\|=\bm{1}^{T}\bm{C}\bm{1}>\mathbb{0}$ because $\bm{C}$ is a column-regular matrix. 

The application of Theorem~\ref{T-minxpqx-Bxx-gxh} requires the evaluation of the minimum $\theta$ according to the formula \eqref{E-theta1} and the description of the solution set using \eqref{E-x1} and \eqref{I-u1}. To refine the formulas, we rewrite the expressions $\bm{B}^{i}\bm{p}$ and $\bm{q}^{-}\bm{B}^{j}$, which become $\bm{R}^{i}\bm{1}$ and $\bm{1}^{T}\bm{C}\bm{R}^{j}$. Next, we represent $\bm{q}^{-}\bm{B}^{\ast}\bm{p}$, $\bm{h}^{-}\bm{B}^{i}\bm{p}$ and $\bm{q}^{-}\bm{B}^{j}\bm{g}$ respectively as $\bm{1}^{T}\bm{C}\bm{R}^{\ast}\bm{1}=\|\bm{C}\bm{R}^{\ast}\|$, $\bm{s}^{-}\bm{R}^{i}\bm{1}=\|\bm{s}^{-}\bm{R}^{i}\|$ and $\bm{1}^{T}\bm{C}\bm{R}^{j}\bm{g}=\|\bm{C}\bm{R}^{j}\bm{g}\|$.

Substitution of the above expressions yields \eqref{E-theta2}, \eqref{E-x2} and \eqref{I-u2}.
\end{proof}
 
Finally, we consider problem \eqref{P-minmaxximaxxi} and note that it has the same system of constraints as the previous problem. The objective function is different, given in the max-plus algebra setting by
\begin{equation*}
\bigoplus_{1\leq i\leq n}x_{i}
\bigoplus_{1\leq l\leq n}x_{l}^{-1}
=
\bm{x}^{-}\bm{1}\bm{1}^{T}\bm{x}.
\end{equation*}

In vector form the problem is written as 
\begin{equation}
\begin{aligned}
&
\min_{\bm{x},\bm{y}}
&&
\bm{x}^{-}\bm{1}\bm{1}^{T}\bm{x};
\\
&
&&
\bm{B}\bm{x}
\leq
\bm{x},
\quad
\bm{C}\bm{x}
=
\bm{y},
\\
&
&&
\bm{D}\bm{y}
\leq
\bm{x},
\quad
\bm{g}
\leq
\bm{x}
\leq
\bm{h},
\quad
\bm{y}
\leq
\bm{f}.
\end{aligned}
\label{P-minxx-Bxx-Cxy-Dyx-gxh-yf}
\end{equation}

We solve the problem in the same way as \eqref{P-minxy-Bxx-Cxy-Dyx-gxh-yf} to obtain the following result.
\begin{theorem}
\label{T-minxx-Bxx-Cxy-Dyx-gxh-yf}
%Let $\bm{B}$ and $\bm{D}$ be matrices and $\bm{C}$ column-regular matrix such that the matrix $\bm{R}=\bm{B}\oplus\bm{D}\bm{C}$ satisfies the condition $\mathop\mathrm{Tr}(\bm{R})\leq\mathbb{1}$.
%Let $\bm{g}$ be a vector, $\bm{f}$ and $\bm{h}$ regular vectors such that the vector $\bm{s}^{-}=\bm{f}^{-}\bm{C}\oplus\bm{h}^{-}$ satisfies the condition $\bm{s}^{-}\bm{R}^{\ast}\bm{g}\leq\mathbb{1}$.
%Then, the minimum value of the objective function in problem \eqref{P-minxx-Bxx-Cxy-Dyx-gxh-yf} is equal to
Under the same conditions as in Theorem~\ref{T-minxy-Bxx-Cxy-Dyx-gxh-yf}, the minimum value of the objective function in problem \eqref{P-minxx-Bxx-Cxy-Dyx-gxh-yf} is equal to
\begin{equation}
\theta
=
\bigoplus_{0\leq i+j\leq n-2}
\|\bm{s}^{-} \bm{R}^{i} \|
\|\bm{R}^{j}\bm{g}\|
\oplus
\|\bm{R}^{\ast}\|,
\label{E-theta3}
\end{equation}
and all regular solutions are given in the parametric form
\begin{equation}
\bm{x}
=
\bm{G}\bm{u},
\qquad
\bm{y}
=
\bm{C}\bm{G}\bm{u},
\qquad
\bm{G}
=
\bigoplus_{0 \leq i+j \leq n-2}
\theta^{-1}
\bm{R}^{i}\bm{1}
\bm{1}^{T}\bm{R}^{j}
\oplus
\bm{R}^{\ast},
\label{E-x3}
\end{equation}
where $\bm{u}\ne\bm{0}$ is a vector of parameters that satisfies the conditions
\begin{equation}
\bm{g}
\leq
\bm{u}
\leq
(\bm{s}^{-}\bm{G})^{-}.
\label{I-u3}
\end{equation}
\end{theorem}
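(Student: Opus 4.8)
The plan is to mimic the proof of Theorem~\ref{T-minxy-Bxx-Cxy-Dyx-gxh-yf} almost verbatim, since problems \eqref{P-minxx-Bxx-Cxy-Dyx-gxh-yf} and \eqref{P-minxy-Bxx-Cxy-Dyx-gxh-yf} have the same constraint system and differ only in the objective. First I would eliminate the unknown vector $\bm{y}$ by the substitution $\bm{y}=\bm{C}\bm{x}$, which turns $\bm{D}\bm{y}\leq\bm{x}$ into $\bm{D}\bm{C}\bm{x}\leq\bm{x}$ and $\bm{y}\leq\bm{f}$ into $\bm{C}\bm{x}\leq\bm{f}$, while the objective becomes $\bm{x}^{-}\bm{1}\bm{1}^{T}\bm{x}$. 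Coupling $\bm{B}\bm{x}\leq\bm{x}$ with $\bm{D}\bm{C}\bm{x}\leq\bm{x}$ gives $\bm{R}\bm{x}=(\bm{B}\oplus\bm{D}\bm{C})\bm{x}\leq\bm{x}$; applying Lemma~\ref{L-Axb} to $\bm{C}\bm{x}\leq\bm{f}$ and combining the result with $\bm{x}\leq\bm{h}$ via conjugate transposition yields $\bm{x}\leq(\bm{f}^{-}\bm{C}\oplus\bm{h}^{-})^{-}=\bm{s}$. This is the identical reduction used in the previous proof, so the problem becomes $\min\bm{x}^{-}\bm{1}\bm{1}^{T}\bm{x}$ subject to $\bm{R}\bm{x}\leq\bm{x}$ and $\bm{g}\leq\bm{x}\leq\bm{s}$.

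Next I would recognize this reduced problem as an instance of \eqref{P-minxpqx-Bxx-gxh}, now with $\bm{p}=\bm{1}$, $\bm{q}=\bm{1}$ (hence $\bm{q}^{-}=\bm{1}^{T}$), $\bm{B}=\bm{R}$ and $\bm{h}=\bm{s}$ — the only change from the proof of Theorem~\ref{T-minxy-Bxx-Cxy-Dyx-gxh-yf} being that $\bm{q}^{-}=\bm{1}^{T}$ replaces $\bm{q}^{-}=\bm{1}^{T}\bm{C}$. I would then verify the hypotheses of Theorem~\ref{T-minxpqx-Bxx-gxh}: the requirement $\mathop\mathrm{Tr}(\bm{B})\leq\mathbb{1}$ is the assumption $\mathop\mathrm{Tr}(\bm{R})\leq\mathbb{1}$, the requirement $\bm{h}^{-}\bm{B}^{\ast}\bm{g}\leq\mathbb{1}$ is $\bm{s}^{-}\bm{R}^{\ast}\bm{g}=\|\bm{s}^{-}\bm{R}^{\ast}\|\leq\mathbb{1}$, both assumed here, and $\bm{q}^{-}\bm{p}=\bm{1}^{T}\bm{1}=\|\bm{1}\|>\mathbb{0}$ holds trivially.

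Applying Theorem~\ref{T-minxpqx-Bxx-gxh} then produces the minimum $\theta$ from \eqref{E-theta1} and all regular solutions from \eqref{E-x1}, \eqref{I-u1}. It remains only to translate the abstract matrix-vector expressions back into the notation of the scheduling problem: $\bm{B}^{i}\bm{p}=\bm{R}^{i}\bm{1}$, $\bm{q}^{-}\bm{B}^{j}=\bm{1}^{T}\bm{R}^{j}$, $\bm{q}^{-}\bm{B}^{\ast}\bm{p}=\bm{1}^{T}\bm{R}^{\ast}\bm{1}=\|\bm{R}^{\ast}\|$, $\bm{h}^{-}\bm{B}^{i}\bm{p}=\bm{s}^{-}\bm{R}^{i}\bm{1}=\|\bm{s}^{-}\bm{R}^{i}\|$ and $\bm{q}^{-}\bm{B}^{j}\bm{g}=\bm{1}^{T}\bm{R}^{j}\bm{g}=\|\bm{R}^{j}\bm{g}\|$. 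Substituting these yields \eqref{E-theta3} and the matrix $\bm{G}$ in \eqref{E-x3}; the parameter condition \eqref{I-u3} is inherited from \eqref{I-u1} unchanged, and $\bm{y}$ is recovered as $\bm{y}=\bm{C}\bm{x}=\bm{C}\bm{G}\bm{u}$. I do not anticipate a genuine obstacle here: the whole argument is a routine specialization of Theorem~\ref{T-minxpqx-Bxx-gxh}, and the only point deserving a moment's care is to present the constraint-reduction step (elimination of $\bm{y}$, assembly of $\bm{R}$, and of $\bm{s}$) exactly as in the preceding proof, so that the hypotheses line up and the proof can legitimately be abbreviated to ``solve in the same way as \eqref{P-minxy-Bxx-Cxy-Dyx-gxh-yf}''.
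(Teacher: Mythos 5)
Your proposal is correct and follows exactly the route the paper intends: the paper itself dispenses with a written proof by stating that problem \eqref{P-minxx-Bxx-Cxy-Dyx-gxh-yf} is ``solved in the same way'' as \eqref{P-minxy-Bxx-Cxy-Dyx-gxh-yf}, i.e., by the same elimination of $\bm{y}$ and reduction to Theorem~\ref{T-minxpqx-Bxx-gxh}, now with $\bm{p}=\bm{q}=\bm{1}$ instead of $\bm{p}=\bm{1}$, $\bm{q}^{-}=\bm{1}^{T}\bm{C}$. Your verification of the hypotheses and the translation of $\bm{h}^{-}\bm{B}^{i}\bm{p}$, $\bm{q}^{-}\bm{B}^{j}\bm{g}$ and $\bm{q}^{-}\bm{B}^{\ast}\bm{p}$ into the norm expressions of \eqref{E-theta3}--\eqref{I-u3} are exactly what is needed.
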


\section{Application to vaccination scheduling}
\label{S-AVS}

The purpose of this section is to illustrate the obtained results in a real-world context of the healthcare industry. We consider problems of temporal scheduling of vaccination activities in a medical (vaccination) centre that plans to administer vaccines to its patients (see \cite{Department2006Immunisation,Kroger2022General} for further detail on vaccine administration and scheduling).

The vaccination plan involves immunizing each patient with one or more prescribed vaccines. The immunization session of a patient with a vaccine is assumed to start with administering the vaccine dose using a dedicated (injectable, oral or intranasal) route. The patient remains under observation for possible adverse reactions at the patient treatment area for a time determined by the vaccine, after which the session is considered completed. 

Suppose there are $n$ vaccine doses to be administered in the centre. For each dose $i=1,\ldots,n$, we denote by $x_{i}$ the start time and by $y_{i}$ the finish time of the immunization session to be scheduled. Furthermore, we discuss how constraints \eqref{I-maxbijxj-xi}--\eqref{I-gixihi-yifi} in the general scheduling problems under study can be interpreted in the framework of vaccination activities.

First, we note that immunization of many patients at one medical site must follow severe requirements to minimize risk for disease exposure and spread. The required measures include cleaning and disinfecting equipment, supplies and workspaces, changing and disposing gloves, needles, syringes, as well as carrying out other procedures between each patient treatment. As a result, the start times of sessions must satisfy the inequalities $b_{ij}+x_{j}\leq x_{i}$ where $b_{ij}$ is a positive parameter that represents the minimal break time required to ensure a safe transition from one vaccination to another.

At the same time, we take into account the time frame typically specified by the vaccine manufacturer or defined by technical requirements for the vaccine to remain safe and capable once the vaccine is drawn up and/or reconstituted. This leads to the above inequalities with a negative parameter $b_{ij}$ (such that the maximum time interval between $x_{i}$ and $x_{j}$ is given by $-b_{ij}$). By combining all these inequalities, we obtain the constraints in the form of \eqref{I-maxbijxj-xi}. 

Furthermore, the duration of each vaccination session measured as the time between the start and finish of the session cannot be less than the combined time taken for administration of the dose and observation of adverse reactions. Given a parameter $c_{ij}$ equal to the minimal time slot to be allocated for session $i$ if $j=i$, or set to $-\infty$ otherwise, we have inequalities $c_{ij}+x_{j}\leq y_{i}$, which further combine into constraint \eqref{E-maxcijxj-xi}.

For certain pairs of vaccines or patients, it may be necessary to exclude the overlap in time of vaccination sessions by shifting forward the start of one session relative to the finish of another to provide time, for example, for advanced disinfection procedures or equipment operations. This condition requires that the inequalities $d_{ij}+y_{j}\leq x_{i}$ are fulfilled, where $d_{ij}$ is minimum shift length, which leads to constraint \eqref{I-maxdijxj-xi}.

Finally, we consider box constraints that may be imposed on the start time of the immunization session in the form of a time window with a lower limit $g_{i}$ (release time) and upper limit $h_{i}$ (release deadline) as a result of technological, organizational, managerial or other regulations and restrictions. Together with a natural deadline $f_{i}$ given for the completion of the session, we arrive at \eqref{I-gixihi-yifi}.

A common objective of the optimal scheduling of vaccination sessions is to minimize the makespan of the vaccination plan, defined as the time between the earliest start time and the latest finish time over all sessions and written as \eqref{E-maxyiminxi}. Combining with the above described constraints yields a project scheduling problem in the form of \eqref{P-minmaxyimaxxi} or, in terms of tropical algebra, a problem at \eqref{P-minxy-Bxx-Cxy-Dyx-gxh-yf}. A complete solution to the later problem is provided by Theorem~\ref{T-minxy-Bxx-Cxy-Dyx-gxh-yf}.

We now present a numerical example that demonstrates computational technique involved in the solution offered by Theorem~\ref{T-minxy-Bxx-Cxy-Dyx-gxh-yf}.

\begin{example}
Consider a problem of optimal vaccination scheduling according to the minimal makespan objective, which is formulated in the framework of max-plus algebra $\mathbb{R}_{\max,+}$ as \eqref{P-minxy-Bxx-Cxy-Dyx-gxh-yf}. Suppose that the temporal constraints are given by the following matrices and vectors (where the symbol $\mathbb{0}=-\infty$ is used for the sake of compactness):
\begin{gather*}
\bm{B}
=
%\begin{pmatrix}
\left(
\begin{array}{ccrcr}
0 & \mathbb{0} & \mathbb{0} & 0 & \mathbb{0}
\\
1 & 0 & \mathbb{0} & \mathbb{0} & \mathbb{0}
\\
\mathbb{0} & \mathbb{0} & 0 & 1 & -1
\\
0 & \mathbb{0} & \mathbb{0} & 0 & \mathbb{0}
\\
\mathbb{0} & \mathbb{0} & -1 & \mathbb{0} & 0
%\end{pmatrix},
\end{array}
\right),
\qquad
\bm{C}
=
%\begin{pmatrix}
\left(
\begin{array}{ccccc}
4 & \mathbb{0} & \mathbb{0} & \mathbb{0} & \mathbb{0}
\\
\mathbb{0} & 4 & \mathbb{0} & \mathbb{0} & \mathbb{0}
\\
\mathbb{0} & \mathbb{0} & 5 & \mathbb{0} & \mathbb{0}
\\
\mathbb{0} & \mathbb{0} & \mathbb{0} & 5 & \mathbb{0}
\\
\mathbb{0} & \mathbb{0} & \mathbb{0} & \mathbb{0} & 3
%\end{pmatrix},
\end{array}
\right),
\qquad
\bm{D}
=
%\begin{pmatrix}
\left(
\begin{array}{ccccc}
\mathbb{0} & \mathbb{0} & \mathbb{0} & \mathbb{0} & \mathbb{0}
\\
\mathbb{0} & \mathbb{0} & \mathbb{0} & \mathbb{0} & \mathbb{0}
\\
0 & \mathbb{0} & \mathbb{0} & \mathbb{0} & \mathbb{0}
\\
\mathbb{0} & \mathbb{0} & \mathbb{0} & \mathbb{0} & \mathbb{0}
\\
\mathbb{0} & 0 & \mathbb{0} & 0 & \mathbb{0}
%\end{pmatrix}.
\end{array}
\right),
\\
\bm{g}
=
\left(
\begin{array}{c}
0
\\
0
\\
0
\\
0
\\
0
\end{array}
\right),
\qquad
\bm{h}
=
\left(
\begin{array}{c}
4
\\
5
\\
8
\\
9
\\
5
\end{array}
\right),
\qquad
\bm{f}
=
\left(
\begin{array}{c}
12
\\
12
\\
12
\\
12
\\
12
\end{array}
\right).
\end{gather*}

The constraints defined by the matrices have the following interpretation. Consider the matrix $\bm{B}$ and examine those entries which are not equal to $\mathbb{0}$. First note that the diagonal entries trivially set to arithmetic zero since $x_{i}\leq x_{i}$ for all $i$. The nonzero off-diagonal entries of the matrix $\bm{B}$ yield the inequalities
\begin{equation*}
x_{4}\leq x_{1},
\quad
1x_{1}\leq x_{2},
\quad
1x_{4}\leq x_{3},
\quad
(-1)x_{5}\leq x_{3},
\quad
x_{1}\leq x_{4},
\quad
(-1)x_{3}\leq x_{5}.
\end{equation*}

These conditions establish the relationships between the start times of vaccination sessions, which read in conventional notation as 
\begin{equation*}
x_{1}=x_{4},
\qquad
x_{1}+1\leq x_{2},
\qquad
\lvert x_{3}-x_{5}\rvert\leq1,
\qquad
x_{4}+1\leq x_{3}.
\end{equation*}

The diagonal entries of the matrix $\bm{C}$ specifies the minimum duration of vaccination sessions consisting of vaccine administration and patient observation. 

The entries in the matrix $\bm{D}$, which differ from $\mathbb{0}$, set the following relationships between the start and finish times of sessions:
\begin{equation*}  
y_{1}\leq x_{3},
\qquad
y_{2}\leq x_{5},
\qquad
y_{4}\leq x_{5}.
\end{equation*}  

To solve the problem by applying Theorem~\ref{T-minxy-Bxx-Cxy-Dyx-gxh-yf}, we need to verify the conditions of the theorem. For this purpose, we evaluate the matrices 
\begin{equation*}
\bm{D}\bm{C}
=
\left(
\begin{array}{ccccc}
\mathbb{0} & \mathbb{0} & \mathbb{0} & \mathbb{0} & \mathbb{0}
\\
\mathbb{0} & \mathbb{0} & \mathbb{0} & \mathbb{0} & \mathbb{0}
\\
4 & \mathbb{0} & \mathbb{0} & \mathbb{0} & \mathbb{0}
\\
\mathbb{0} & \mathbb{0} & \mathbb{0} & \mathbb{0} & \mathbb{0}
\\
\mathbb{0} & 4 & \mathbb{0} & 5 & \mathbb{0}
%\end{pmatrix}.
\end{array}
\right),
\qquad
\bm{R}
=
\bm{B}\oplus\bm{D}\bm{C}
=
\left(
\begin{array}{ccrcr}
0 & \mathbb{0} & \mathbb{0} & 0 & \mathbb{0}
\\
1 & 0 & \mathbb{0} & \mathbb{0} & \mathbb{0}
\\
4 & \mathbb{0} & 0 & 1 & -1
\\
0 & \mathbb{0} & \mathbb{0} & 0 & \mathbb{0}
\\
\mathbb{0} & 4 & -1 & 5 & 0
\end{array}
\right).
\end{equation*}

After the calculation of powers of the matrix $\bm{R}$, we obtain
\begin{equation*}
\bm{R}^{2}
=
\bm{R}^{3}
=
\bm{R}^{4}
=
\bm{R}^{5}
=
\left(
\begin{array}{ccrcr}
0 & \mathbb{0} & \mathbb{0} & 0 & \mathbb{0}
\\
1 & 0 & \mathbb{0} & 1 & \mathbb{0}
\\
4 & 3 & 0 & 4 & -1
\\
0 & \mathbb{0} & \mathbb{0} & 0 & \mathbb{0}
\\
5 & 4 & -1 & 5 & 0
\end{array}
\right).
\end{equation*}

Since $\mathop\mathrm{tr}\bm{R}^{k}=0$ for all $k=1,\ldots,5$, we have $\mathop\mathrm{Tr}(\bm{R})
=\mathop\mathrm{tr}\bm{R}\oplus\cdots\oplus\mathop\mathrm{tr}\bm{R}^{5}=0=\mathbb{1}$, and thus we conclude that the condition $\mathop\mathrm{Tr}(\bm{R})\leq\mathbb{1}$ is satisfied.

Furthermore, we form the Kleene matrix
\begin{equation*}
\bm{R}^{\ast}
=
\bm{I}\oplus\bm{R}\oplus\cdots\oplus\bm{R}^{4}
=
\left(
\begin{array}{ccrcr}
0 & \mathbb{0} & \mathbb{0} & 0 & \mathbb{0}
\\
1 & 0 & \mathbb{0} & 1 & \mathbb{0}
\\
4 & 3 & 0 & 4 & -1
\\
0 & \mathbb{0} & \mathbb{0} & 0 & \mathbb{0}
\\
5 & 4 & -1 & 5 & 0
\end{array}
\right).
\end{equation*}

We successfully calculate the row vectors
\begin{gather*}
\bm{f}^{-}\bm{C}
=
\left(
\begin{array}{rrrrr}
-8 & -8 & -7 & -10 & -9
\end{array}
\right),
\qquad
\bm{s}^{-}
=
\bm{f}^{-}\bm{C}
\oplus
\bm{h}^{-}
=
\left(
\begin{array}{rrrrr}
-4 & -5 & -7 & -9 & -5
\end{array}
\right),
\\
\bm{s}^{-}\bm{R}^{\ast}
=
\left(
\begin{array}{crrcr}
0 & -1 & -6 & 0 & -5
\end{array}
\right),
\end{gather*}
and the scalar $\bm{s}^{-}\bm{R}^{\ast}\bm{g}=0=\mathbb{1}$, which shows that the condition $\bm{s}^{-}\bm{R}^{\ast}\bm{g}\leq\mathbb{1}$ holds.

We now evaluate the minimum $\theta$ of the objective function, defined by \eqref{E-theta2} as
\begin{multline*}
\theta
=
\|\bm{s}^{-}\|(\|\bm{C}\bm{g}\|\oplus\|\bm{C}\bm{R}\bm{g}\|\oplus\|\bm{C}\bm{R}^{2}\bm{g}\|\oplus\|\bm{C}\bm{R}^{3}\bm{g}\|)
\\
\oplus
\|\bm{s}^{-}\bm{R}\|(\|\bm{C}\bm{g}\|\oplus\|\bm{C}\bm{R}\bm{g}\|\oplus\|\bm{C}\bm{R}^{2}\bm{g}\|)
\oplus
\|\bm{s}^{-}\bm{R}^{2}\|(\|\bm{C}\bm{g}\|\oplus\|\bm{C}\bm{R}\bm{g}\|)
\\
\oplus
\|\bm{s}^{-}\bm{R}^{3}\|\|\bm{C}\bm{g}\|
\oplus
\|\bm{C}\bm{R}^{\ast}\|.
\end{multline*}

First, we obtain the row vectors
\begin{equation*}
\bm{s}^{-}\bm{R}
=
\left(
\begin{array}{rrrcr}
-3 & -1 & -6 & 0 & -5
\end{array}
\right),
\qquad
\bm{s}^{-}\bm{R}^{2}
=
\bm{s}^{-}\bm{R}^{3}
=
\left(
\begin{array}{crrcr}
0 & -1 & -6 & 0 & -5
\end{array}
\right).
\end{equation*}

Next, we find the matrices
\begin{equation*}
\bm{C}\bm{R}
=
\left(
\begin{array}{ccccc}
4 & \mathbb{0} & \mathbb{0} & 4 & \mathbb{0}
\\
5 & 4 & \mathbb{0} & \mathbb{0} & \mathbb{0}
\\
9 & \mathbb{0} & 5 & 6 & 4
\\
5 & \mathbb{0} & \mathbb{0} & 5 & \mathbb{0}
\\
\mathbb{0} & 7 & 2 & 8 & 3
\end{array}
\right),
\qquad
\bm{C}\bm{R}^{2}
=
\bm{C}\bm{R}^{3}
=
\left(
\begin{array}{ccccc}
4 & \mathbb{0} & \mathbb{0} & 4 & \mathbb{0}
\\
5 & 4 & \mathbb{0} & 5 & \mathbb{0}
\\
9 & 8 & 5 & 9 & 4
\\
5 & \mathbb{0} & \mathbb{0} & 5 & \mathbb{0}
\\
8 & 7 & 2 & 8 & 3
\end{array}
\right),
\end{equation*}
and then calculate the column vectors 
\begin{equation*}
\bm{C}\bm{g}
=
\left(
\begin{array}{c}
4
\\
4
\\
5
\\
5
\\
3
\end{array}
\right),
\qquad
\bm{C}\bm{R}\bm{g}
=
\left(
\begin{array}{c}
4
\\
5
\\
9
\\
5
\\
8
\end{array}
\right),
\qquad
\bm{C}\bm{R}^{2}\bm{g}
=
\bm{C}\bm{R}^{3}\bm{g}
=
\left(
\begin{array}{c}
4
\\
5
\\
9
\\
5
\\
8
\end{array}
\right).
\end{equation*}

We evaluate the norms to obtain the results
\begin{gather*}
\|\bm{s}^{-}\|
=
-4,
\qquad
\|\bm{s}^{-}\bm{R}\|
=
\|\bm{s}^{-}\bm{R}^{2}\|
=
\|\bm{s}^{-}\bm{R}^{3}\|
=
0,
\\
\|\bm{C}\bm{g}\|
=
5.
\qquad
\|\bm{C}\bm{R}\bm{g}\|
=
\|\bm{C}\bm{R}^{2}\bm{g}\|
=
\|\bm{C}\bm{R}^{3}\bm{g}\|
=
\|\bm{C}\bm{R}^{\ast}\|
=
9.
\end{gather*}

Finally, substitution of these results yields the minimum
\begin{equation*}
\theta
=
9,
\end{equation*}
which specifies the minimum makespan of the optimal vaccination schedule.

We complete the solution with the determination of the start and finish times of vaccination sessions in the optimal schedule. We begin with the evaluation of the generating matrix $\bm{G}$ according to \eqref{E-x2}, which takes the form
\begin{multline*}
\bm{G}
=
\theta^{-1}
(
\bm{1}
\bm{1}^{T}\bm{C}
\oplus
\bm{1}
\bm{1}^{T}\bm{C}\bm{R}
\oplus
\bm{1}
\bm{1}^{T}\bm{C}\bm{R}^{2}
\oplus
\bm{1}
\bm{1}^{T}\bm{C}\bm{R}^{3}
\\
\oplus
\bm{R}\bm{1}
\bm{1}^{T}\bm{C}
\oplus
\bm{R}\bm{1}
\bm{1}^{T}\bm{C}\bm{R}
\oplus
\bm{R}\bm{1}
\bm{1}^{T}\bm{C}\bm{R}^{2}
\\
\oplus
\bm{R}^{2}\bm{1}
\bm{1}^{T}\bm{C}
\oplus
\bm{R}^{2}\bm{1}
\bm{1}^{T}\bm{C}\bm{R}
\oplus
\bm{R}^{3}\bm{1}
\bm{1}^{T}\bm{C}
)
\oplus
\bm{R}^{\ast}.
\end{multline*}

Furthermore, we consider the column vectors
\begin{equation*}
\bm{1}
=
\left(
\begin{array}{c}
0
\\
0
\\
0
\\
0
\\
0
\end{array}
\right),
\qquad
\bm{R}\bm{1}
=
\bm{R}^{2}\bm{1}
=
\bm{R}^{3}\bm{1}
=
\left(
\begin{array}{c}
0
\\
1
\\
4
\\
0
\\
5
\end{array}
\right),
\qquad
\end{equation*}
and the row vectors
\begin{gather*}
\bm{1}^{T}\bm{C}
=
\left(
\begin{array}{ccccc}
4 & 4 & 5 & 5 & 3
\end{array}
\right),
\qquad
\bm{1}^{T}\bm{C}\bm{R}
=
\left(
\begin{array}{ccccc}
9 & 7 & 5 & 8 & 4
\end{array}
\right),
\\
\bm{1}^{T}\bm{C}\bm{R}^{2}
=
\bm{1}^{T}\bm{C}\bm{R}^{3}
=
\left(
\begin{array}{ccccc}
9 & 8 & 5 & 9 & 4
\end{array}
\right).
\end{gather*}

Multiplying these vectors gives the matrices
\begin{gather*}
\bm{1}\bm{1}^{T}\bm{C}
=
\left(
\begin{array}{ccccc}
4 & 4 & 5 & 5 & 3
\\
4 & 4 & 5 & 5 & 3
\\
4 & 4 & 5 & 5 & 3
\\
4 & 4 & 5 & 5 & 3
\\
4 & 4 & 5 & 5 & 3
\end{array}
\right),
\qquad
\bm{1}\bm{1}^{T}\bm{C}\bm{R}
=
\left(
\begin{array}{ccccc}
9 & 7 & 5 & 8 & 4
\\
9 & 7 & 5 & 8 & 4
\\
9 & 7 & 5 & 8 & 4
\\
9 & 7 & 5 & 8 & 4
\\
9 & 7 & 5 & 8 & 4
\end{array}
\right),
\\
\bm{1}\bm{1}^{T}\bm{C}\bm{R}^{2}
=
\bm{1}\bm{1}^{T}\bm{C}\bm{R}^{3}
=
\left(
\begin{array}{ccccc}
9 & 8 & 5 & 9 & 4
\\
9 & 8 & 5 & 9 & 4
\\
9 & 8 & 5 & 9 & 4
\\
9 & 8 & 5 & 9 & 4
\\
9 & 8 & 5 & 9 & 4
\end{array}
\right),
\qquad
\bm{R}\bm{1}\bm{1}^{T}\bm{C}\bm{R}^{2}
=
\left(
\begin{array}{ccccc}
9 & 8 & 5 & 9 & 4
\\
10 & 9 & 6 & 10 & 5
\\
13 & 12 & 9 & 13 & 8
\\
9 & 8 & 5 & 9 & 4
\\
14 & 13 & 10 & 14 & 9
\end{array}
\right).
\\
\bm{R}\bm{1}\bm{1}^{T}\bm{C}
=
\bm{R}^{2}\bm{1}\bm{1}^{T}\bm{C}
=
\bm{R}^{3}\bm{1}\bm{1}^{T}\bm{C}
=
\left(
\begin{array}{ccccc}
4 & 4 & 5 & 5 & 3
\\
5 & 5 & 6 & 6 & 4
\\
8 & 8 & 9 & 9 & 7
\\
4 & 4 & 5 & 5 & 3
\\
9 & 9 & 10 & 10 & 8
\end{array}
\right),
\\
\bm{R}\bm{1}\bm{1}^{T}\bm{C}\bm{R}
=
\bm{R}^{2}\bm{1}\bm{1}^{T}\bm{C}\bm{R}
=
\left(
\begin{array}{ccccc}
9 & 7 & 5 & 8 & 4
\\
10 & 8 & 6 & 9 & 5
\\
13 & 11 & 9 & 12 & 8
\\
9 & 7 & 5 & 8 & 4
\\
14 & 12 & 10 & 13 & 9
\end{array}
\right).
\end{gather*}

After substitution of these matrices together with the matrix $\bm{R}^{\ast}$ and $\theta=9$, we arrive at the generating matrix
\begin{equation*}
\bm{G}
=
\left(
\begin{array}{crrcr}
0 & -1 & -4 & 0 & -5
\\
1 & 0 & -3 & 1 & -4
\\
4 & 3 & 0 & 4 & -1
\\
0 & -1 & -4 & 0 & -5
\\
5 & 4 & 1 & 5 & 0
\end{array}
\right).
\end{equation*}

According to \eqref{I-u2}, the vector of parameters $\bm{u}$ satisfies the double inequality $\bm{u}_{1}\leq\bm{u}\leq\bm{u}_{2}$ with the bounds given by $\bm{u}_{1}=\bm{g}$ and $\bm{u}_{2}=(\bm{s}^{-}\bm{G})^{-}$. Calculation of these bounds and corresponding solution vectors $\bm{x}_{1}=\bm{G}\bm{u}_{1}$ and $\bm{x}_{2}=\bm{G}\bm{u}_{2}$ yields
\begin{equation*}
\bm{u}_{1}
=
\left(
\begin{array}{c}
0
\\
0
\\
0
\\
0
\\
0
\end{array}
\right),
\qquad
\bm{u}_{2}
=
\left(
\begin{array}{c}
0
\\
1
\\
4
\\
0
\\
5
\end{array}
\right),
\qquad
\bm{x}_{1}
=
\bm{x}_{2}
=
\left(
\begin{array}{c}
0
\\
1
\\
4
\\
0
\\
5
\end{array}
\right).
\end{equation*}

Observing that both solutions coincide, we conclude that the problem has a unique solution $\bm{x}$, which together with the vector $\bm{y}=\bm{C}\bm{x}$ takes the form
\begin{equation*}
\bm{x}
=
\left(
\begin{array}{c}
0
\\
1
\\
4
\\
0
\\
5
\end{array}
\right),
\qquad
\bm{y}
=
\left(
\begin{array}{c}
4
\\
5
\\
9
\\
5
\\
8
\end{array}
\right).
\end{equation*}
 
These vectors $\bm{x}$ and $\bm{y}$ determine the start and finish time of vaccination sessions
\begin{gather*}
x_{1}=0,
\qquad
x_{2}=1,
\qquad
x_{3}=4,
\qquad
x_{4}=0,
\qquad
x_{5}=5,
\\
y_{1}=4,
\qquad
y_{2}=5,
\qquad
y_{3}=9,
\qquad
y_{4}=5,
\qquad
y_{5}=8,
\end{gather*}
which provide an optimal schedule that minimizes the vaccination makespan.

A graphical representation of the optimal schedule in the form of a Gantt chart is given in Figure~\ref{F-GCOSE1}.
\begin{figure}[ht]
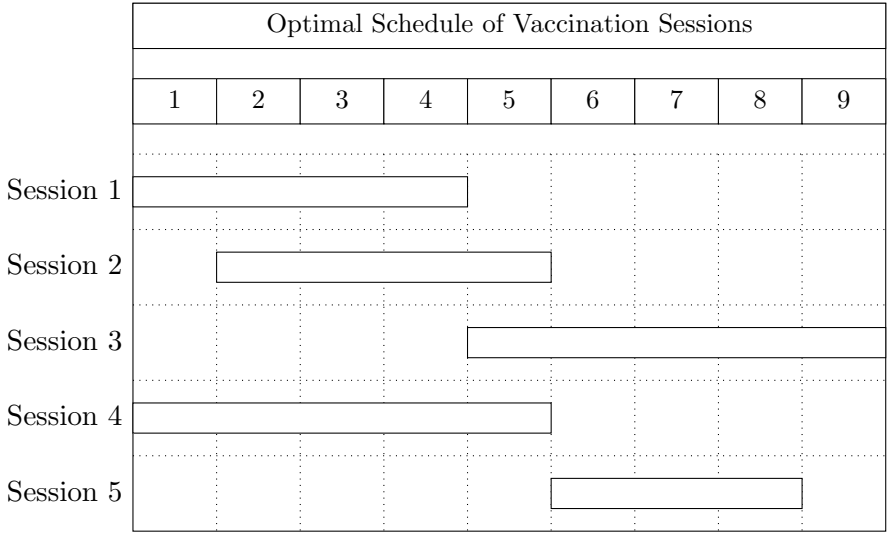

\begin{ganttchart}[
hgrid,
vgrid,
x unit=1.1cm,
y unit title=1cm,
y unit chart=1cm
]{1}{9}
\gantttitle{Optimal Schedule of Vaccination Sessions}{9} \\
\gantttitlelist{1,...,9}{1} \\
\ganttbar{Session 1}{1}{4} \\
\ganttbar{Session 2}{2}{5} \\
\ganttbar{Session 3}{5}{9} \\
\ganttbar{Session 4}{1}{5} \\
\ganttbar{Session 5}{6}{8}
\end{ganttchart}
\caption{The Gantt chart of the optimal schedule of $n=5$ vaccination sessions for Example 1}
\label{F-GCOSE1}
\end{figure}

As it is easy to see, the obtained solutions satisfy all constraints in the problem.
\end{example}

\begin{example}
We now assume that under the same constraints as in the previous example, the problem is to minimize the maximum absolute deviation of start times. This problem takes the form of problem \eqref{P-minxx-Bxx-Cxy-Dyx-gxh-yf}, which is solved by Theorem~\ref{T-minxx-Bxx-Cxy-Dyx-gxh-yf}.

We omit the details of the solution, which follows the same steps as that of \eqref{P-minxy-Bxx-Cxy-Dyx-gxh-yf} and go straight to the results. Evaluation of the minimum $\theta$ according to \eqref{E-theta3}, which shows the minimal deviation of the start times of vaccination sessions yields
\begin{equation*}
\theta=5.
\end{equation*}

The generating matrix to calculate solutions at \eqref{E-x3} takes the form
\begin{equation*}
\bm{G}
=
\left(
\begin{array}{crrcr}
0 & -1 & -5 & 0 & -5
\\
1 & 0 & -4 & 1 & -4
\\
4 & 3 & 0 & 4 & -1
\\
0 & -1 & -5 & 0 & -5
\\
5 & 4 & 0 & 5 & 0
\end{array}
\right).
\end{equation*}

We evaluate the lower and upper bounds $\bm{u}_{1}=\bm{g}$ and $\bm{u}_{2}=(\bm{s}^{-}\bm{G})^{-}$ in \eqref{I-u3} for the parameter vector $\bm{u}$ to find
\begin{equation*}
\bm{u}_{1}
=
\left(
\begin{array}{c}
0
\\
0
\\
0
\\
0
\\
0
\end{array}
\right),
\qquad
\bm{u}_{2}
=
\left(
\begin{array}{c}
0
\\
1
\\
5
\\
0
\\
5
\end{array}
\right).
\end{equation*}

The solution vectors of start time $\bm{x}_{1}=\bm{G}\bm{u}_{1}$ and $\bm{x}_{2}=\bm{G}\bm{u}_{2}$, and the corresponding vectors of finish time $\bm{y}_{1}=\bm{C}\bm{x}_{1}$ and $\bm{y}_{2}=\bm{C}\bm{x}_{2}$ are written as
\begin{equation*}
\bm{x}_{1}
=
\left(
\begin{array}{c}
0
\\
1
\\
4
\\
0
\\
5
\end{array}
\right),
\qquad
\bm{x}_{2}
=
\left(
\begin{array}{c}
0
\\
1
\\
5
\\
0
\\
5
\end{array}
\right),
\qquad
\bm{y}_{1}
=
\left(
\begin{array}{c}
4
\\
5
\\
9
\\
5
\\
8
\end{array}
\right),
\qquad
\bm{y}_{2}
=
\left(
\begin{array}{c}
4
\\
5
\\
10
\\
5
\\
8
\end{array}
\right).
\end{equation*}

Since the vectors in both pairs differ in only one component, the optimal start and finish times of sessions, which minimize the maximum absolute deviation (variation) of the start times, are given by the conditions
\begin{gather*}
x_{1}=0,
\qquad
x_{2}=1,
\qquad
4\leq x_{3}\leq5,
\qquad
x_{4}=0,
\qquad
x_{5}=5,
\\
y_{1}=4,
\qquad
y_{2}=5,
\qquad
y_{3}=x_{3}+5,
\qquad
y_{4}=5,
\qquad
y_{5}=8.
\end{gather*}

Note that if we take $x_{3}=4$, the obtained optimal schedule coincides with that found in the previous example. A common solution of these examples seems to be quite consistent with that as the deviation of start time decreases, the makespan of the schedule has a general tendency to decrease as well.

Figure~\ref{F-GCOSE2} shows another solution where the optimal schedule has the start and finish times of the third session defined as $x_{3}=5$ and $y_{3}=10$.
\begin{figure}[ht]
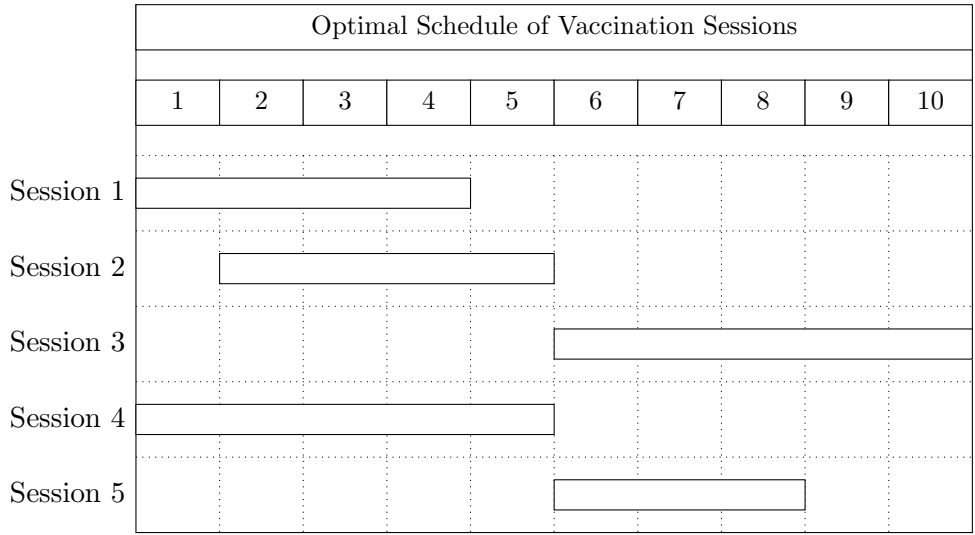

\begin{ganttchart}[
hgrid,
vgrid,
x unit=1.1cm,
y unit title=1cm,
y unit chart=1cm
]{1}{10}
\gantttitle{Optimal Schedule of Vaccination Sessions}{10} \\
\gantttitlelist{1,...,10}{1} \\
\ganttbar{Session 1}{1}{4} \\
\ganttbar{Session 2}{2}{5} \\
\ganttbar{Session 3}{6}{10} \\
\ganttbar{Session 4}{1}{5} \\
\ganttbar{Session 5}{6}{8}
\end{ganttchart}
\caption{The Gantt chart of the optimal schedule of $n=5$ vaccination sessions for Example 2}
\label{F-GCOSE2}
\end{figure}
\end{example}

\section{Conclusions}
\label{S-C}

This study has developed a new algebraic approach to derive direct analytical solutions for temporal project scheduling problems. We have considered a project that consists of a set of activities that are undertaken in parallel under time-based constraints (``start--start'', ``start--finish'' and ``finish--start'' precedence relations, release times, release deadlines and completion deadlines) to achieve time-based objectives (minimizing the project makespan or the maximum absolute deviation of start times of activities). Such problems occur as auxiliary tools to handle general scheduling problems that may involve non-temporal constraints (in terms of manpower, machines, tools, equipment, space, money), and are of independent interest in project management.

A common approach that can be used to solve the above problems is to represent them as linear programs. The problems are then numerically solved using computational methods and techniques of linear programming (simplex method, Karmarkar algorithm), which use iterative computational procedures to obtain one of the solutions if they exist. However, this algorithmic approach does not allow to derive a direct analytical solution of the scheduling problems that is capable to describe all optimal schedules in a compact vector form.
 
To obtain an analytical solution in this study, we have used an approach based on moving from the conventional algebra to the tropical (idempotent) algebra focused on the theory and applications of algebraic systems with idempotent addition. We have started with a short self-contained introduction to tropical algebra so as to make further discussion easier to follow. We have considered a new general constrained optimization problem formulated in terms of tropical algebra, and derived a complete analytical solution to the problem. Finally, the obtained result has been exploited to solve the project scheduling problems of interest, represented in the tropical algebra setting.

As an outcome, new direct solutions to the scheduling problems under study have been found analytically as expressions that concisely describe all optimal schedules in parametric vector form. In contrast to the known algorithmic solutions provided by linear programming, the analytical form of the results is more insightful, handy and flexible. It provides more opportunities for formal analysis of the solutions, can tell more about the fundamental structure of the solution set and better explain the impact of the input parameters on the solutions. Moreover, the analytical representation offers a potential to refine the solution by taking into account additional constraints and objectives, and extend the approach to multicriteria scheduling problems. 

Furthermore, the analytical solution obtained is more straightforward and much simpler to implement. The solution procedure consists in a finite number of matrix-vector operations and is ready for efficient software implementation to run on both serial and parallel computers. This solution is more accurate since it avoids the accumulation of rounding errors that are usually involved in numerical algorithms. At the same time, the computational complexity of the new solution, estimated as $O(n^{3})$ arithmetic operations, is less than the complexity of the Karmarkar algorithm, which is known to be $O(n^{3.5}L)$, where $L$ is the number of bits in the input \cite{Karmarkar1984New}.

Real-world problems of optimal scheduling of vaccination activities in a medical centre were described to emphasize the practical relevance and value of the research. In particular, we have shown what meaning the formal constraints and objectives of the scheduling model used can have in a real situation. Finally, simple illustrative examples were presented in some detail to demonstrate computational technique involved in the solution obtained. 

As it follows from the above remarks and discussion, the proposed algebraic solutions appear a useful complement and supplement to existing algorithmic solutions and become more desirable if an analytical result is of interest.

\bibliographystyle{abbrvurl}
\bibliography{Algebraic_solution_of_project_scheduling_problems_with_temporal_constraints}

\end{document}